\DeclareMathOperator{\Pro}{Pro}
\DeclareMathOperator{\Fct}{Fct}
\DeclareMathOperator{\RHom}{RHom}
\DeclareMathOperator{\fRHom}{R\mathcal{H}om}
\DeclareMathOperator{\Hom}{Hom}
\DeclareMathOperator{\id}{id}
\DeclareMathOperator{\opp}{op}
\DeclareMathOperator{\Mod}{Mod}
\DeclareMathOperator{\Hn}{H}
\DeclareMathOperator{\gr}{gr}
\DeclareMathOperator{\Rg}{R\Gamma}
\DeclareMathOperator{\dR}{R}
\DeclareMathOperator{\Ob}{Ob}
\begin{document}

\theoremstyle{plain} % style plain
\newtheorem{thm}{Theorem}[section]
\newtheorem{cor}[thm]{Corollary}
\newtheorem{prop}[thm]{Proposition}
\newtheorem{lemme}[thm]{Lemma}
\newtheorem{conj}[thm]{Conjecture}
\newtheorem*{theoetoile}{Theorem} % théorème non numéroté
\newtheorem*{conjetoile}{Conjecture} % conjecture non numérotée
\newtheorem*{theoetoilefr}{Théorème}
\newtheorem*{propetoilefr}{Proposition}

\theoremstyle{definition} % style definition
\newtheorem{defi}[thm]{Definition}
\newtheorem{example}[thm]{Example}
\newtheorem{examples}[thm]{Examples}
\newtheorem{question}[thm]{Question}
\newtheorem{Rem}[thm]{Remark}
\newtheorem{Notation}[thm]{Notation}

\numberwithin{equation}{section}

\newcommand{\On}[1]{\mathcal{O}_{#1}}
\newcommand{\En}[1]{\mathcal{E}_{#1}}
\newcommand{\Fn}[1]{\mathcal{F}_{#1}} 
\newcommand{\tFn}[1]{\mathcal{\tilde{F}}_{#1}}
\newcommand{\hum}[1]{hom_{\mathcal{A}}({#1})}
\newcommand{\hcl}[2]{#1_0 \lbrack #1_1|#1_2|\ldots|#1_{#2} \rbrack}
\newcommand{\hclp}[3]{#1_0 \lbrack #1_1|#1_2|\ldots|#3|\ldots|#1_{#2} \rbrack}
\newcommand{\catMod}{\mathsf{Mod}}
\newcommand{\Der}{\mathsf{D}}
\newcommand{\Ds}{D_{\mathbb{C}}}
\newcommand{\DG}{\mathsf{D}^{b}_{dg,\mathbb{R}-\mathsf{C}}(\mathbb{C}_X)}
\newcommand{\lI}{[\mspace{-1.5 mu} [}
\newcommand{\rI}{] \mspace{-1.5 mu} ]}
\newcommand{\Ku}[2]{\mathfrak{K}_{#1,#2}}
\newcommand{\iKu}[2]{\mathfrak{K^{-1}}_{#1,#2}}
\newcommand{\Be}{B^{e}}
\newcommand{\op}[1]{#1^{\opp}}
\newcommand{\N}{\mathbb{N}}
\newcommand{\Ab}[1]{#1/\lbrack #1 , #1 \rbrack}
\newcommand{\Du}{\mathbb{D}}
\newcommand{\C}{\mathbb{C}}
\newcommand{\Z}{\mathbb{Z}}
\newcommand{\w}{\omega}
\newcommand{\K}{\mathcal{K}}
\newcommand{\Hoc}{\mathcal{H}\mathcal{H}}
\newcommand{\env}[1]{{\vphantom{#1}}^{e}{#1}}
\newcommand{\eA}{{}^eA}
\newcommand{\eB}{{}^eB}
\newcommand{\eC}{{}^eC}
\newcommand{\cA}{\mathcal{A}} 
\newcommand{\cB}{\mathcal{B}}
\newcommand{\cR}{\mathcal{R}}
\newcommand{\cL}{\mathcal{L}}
\newcommand{\cO}{\mathcal{O}}
\newcommand{\cM}{\mathcal{M}}
\newcommand{\cN}{\mathcal{N}}
\newcommand{\cK}{\mathcal{K}}
\newcommand{\cC}{\mathcal{C}}
\newcommand{\Hper}{\Hn^0_{\textrm{per}}}
\newcommand{\Dper}{\Der_{\mathrm{perf}}}
\newcommand{\Yo}{\textrm{Y}}
\newcommand{\gqcoh}{\mathrm{gqcoh}}
\newcommand{\coh}{\mathrm{coh}}
\newcommand{\cc}{\mathrm{cc}}
\newcommand{\qcc}{\mathrm{qcc}}
\newcommand{\qcoh}{\mathrm{qcoh}}
\newcommand{\obplus}[1][i \in I]{\underset{#1}{\overline{\bigoplus}}}
\newcommand{\Lte}{\mathop{\otimes}\limits^{\rm L}}
\newcommand{\pt}{\textnormal{pt}}
\newcommand{\A}[1][X]{\cA_{{#1}}}
\newcommand{\dA}[1][X]{\cC_{X_{#1}}}
\newcommand{\conv}[1][]{\mathop{\circ}\limits_{#1}}
\newcommand{\sconv}[1][]{\mathop{\ast}\limits_{#1}}
\newcommand{\reim}[1]{\textnormal{R}{#1}_!}
\newcommand{\roim}[1]{\textnormal{R}{#1}_\ast}
\newcommand{\ldetens}{\overset{\mathnormal{L}}{\underline{\boxtimes}}}
\newcommand{\br}{\bigr)}
\newcommand{\bl}{\bigl(}
\newcommand{\sC}{\mathscr{C}}
\newcommand{\ucat}{\mathbf{1}}
\newcommand{\ubtimes}{\underline{\boxtimes}}
\newcommand{\uLte}{\mathop{\underline{\otimes}}\limits^{\rm L}} 
\newcommand{\Lp}{\mathrm{L}p}
\newcommand{\cLte}{\mathop{\overline{\otimes}}\limits^{\rm L}}

\author{Fran\c{c}ois Petit}
\address{Max Planck Institute for Mathematics, Vivatsgasse 7, 53111 Bonn, Germany}
\email{petit@mpim-bonn.mpg.de}
\title[Fourier-Mukai transform in the quantized setting]{Fourier-Mukai transform in the quantized setting}
\begin{abstract} 
We prove that a coherent DQ-kernel induces an equivalence between the derived categories of DQ-modules with coherent cohomology if and only if the graded commutative kernel associated to it induces an equivalence between the derived categories of coherent sheaves. 
\end{abstract}

\maketitle
\section{Introduction}
Fourier-Mukai transform has been extensively studied in algebraic geometry and is still an active area of research (see \cite{Barto} and \cite{Huy}). In the past years, several works have extended to the framework of deformation quantization of complex varieties some important aspects of the theory of integral transforms. In \cite{KS3}, Kashiwara and Schapira have developed the necessary formalism to study integral transforms in the framework of DQ-modules and some classical results have been extended to the quantized setting. In particular, in \cite{PantenBassa}, Ben-Bassat, Block and Pantev have quantized the Poincaré bundle and shown it induces an equivalence between certain derived categories of coherent DQ-modules. 

Our paper grew out of an attempt to understand which properties the integral transforms  associated to the quantization of a coherent kernel would enjoy.

The main result of this paper is Theorem \ref{finalmukai} which states that a coherent DQ-kernel induces an equivalence between the derived categories of DQ-modules with coherent cohomology if and only if the graded commutative kernel associated to it induces an equivalence between the derived categories of coherent sheaves. Whereas the second part of the proof relies on technique of cohomological completion, the first part builds upon the results of \cite{dgaff}. Indeed, as explained in section \ref{adjs} there is a pair of adjoint functors between the categories of qcc objects and the derived category of quasi-coherent sheaves. Both of these functors preserve compact generators. Then, roughly speaking, to show that a certain property of the quantized integral transform implies a similar properties at the commutative level it is sufficient to check that the category of objects satisfying this properties is thick and that this property hold at the quantized level for a compact generator of the triangulated category of qcc objects.

This paper is organized as follow. In the second section we review some material about DQ-modules, cohomological completeness, compactly generated categories, thick subcategories and qcc modules. In the third section, we study integral transforms in the quantized setting. We start by extending the framework of convolutions of kernels of \cite{KS3} to the case of qcc objects and prove that an integral transform of qcc objects preserving compact objects has a coherent kernel (Thm. \ref{thm:kercoh}). Then, we concentrate our attention to the case of integral transforms with coherent kernel. We start by extending to DQ-modules some classical adjunction results and then establish the main theorem of this paper. Finally, in an appendix we show that the cohomological dimension of a certain functor is finite.  

\begin{flushleft}
\textbf{Aknowledgement}: I would like to thank Oren Ben-Bassat, Andrei C\v{a}ld\v{a}raru, Carlo Rossi, Pierre Schapira, Nicol\`o Sibilla, Geordie Williamson for many useful discussions and Damien Calaque and Michel Vaquié for their careful reading of early version of the manuscript and numerous suggestions which have allowed substantial improvements.
\end{flushleft}

%We proved that in some sense the property for a Kernel to induce an equivalence is an open property. More precisely, we obtain the Theorem \ref{finalmukai} which says that for any two smooth complex projective varieties $X_1, \; X_2$ equipped respectively with DQ-algebroid stacks $\cA_1, \cA_2$ and any kernel $\cK \in \Der_{\coh}^b(\cA_{12^a})$, the integral transforms $\Phi_\cK: \Der^b_{\coh}(\cA_2) \to \Der^b_{\coh}(\cA_1)$ associated to $\cK$ is is fully faithful (resp. an equivalence of triangulated categories) if and only if $\Phi_{\gr_\hbar \cK}:\Der^b_{\coh}(\cO_2) \to \Der^b_{\coh}(\cO_1)$ is fully faithful (resp. an equivalence of triangulated categories). This result allows to simplify the proof of the fact that the quantization of the Mukai Kernel is an equivalences.
%
%This paper is organized as follows.

%A key results of Orlov (see \cite{Orlov}) states that any equivalence of triangulated categories between the bounded derived categories of coherent sheaves on smooth projective varieties is  isomorphic to a Fourier-Mukai functor. We refer the reader to the seminal book of Huybrechts \cite{Huy} for an in depth treatment of Fourier-Mukai transforms in algebraic geometry.
% Recently, Arinkin, Block and Pantev have proved in \cite{Mukai_Pantev} that under some condition an integral transform between complex manifolds can be lifted to the non-commutative level.

\section{Some recollections on DQ-modules}\label{adjs}

\subsection{DQ-modules}

We refer the reader to \cite{KS3} for an in-depth study of DQ-modules. Let us briefly fix some notations. 
Let $(X,\cO_X)$ be a smooth complex algebraic variety endowed with DQ-algebroid $\cA_X$. It is possible to define a quotient algebroid stack $\cA_X / \hbar \cA_X$. It comes with a canonical morphism of algebroid stack $\cA_X \to \cA_X / \hbar \cA_X$. On a smooth complex algebraic variety the stack $\cA_X / \hbar \cA_X$ is equivalent to the algebroid stack associated to $\cO_X$. Thus, there is a natural morphism $\cA_X \to \cO_X$ of $\C$-algebroid stacks which induces a functor

\begin{equation*}
\iota_g:\Mod(\cO_X) \to \Mod(\cA_X).
\end{equation*}

The functor $\iota_g$ is exact and fully faithful and induces a functor

\begin{equation*}
\iota_g:\Der(\cO_X) \to \Der(\cA_X).
\end{equation*}

\begin{defi}
We denote by $\gr_\hbar: \Der(\cA_X) \to \Der(\cO_X)$ the left derived functor of the right exact functor $\Mod(\cA_X) \to \Mod(\cO_X)$ given by $\mathcal{M} \mapsto \mathcal{M} / \hbar \mathcal{M}\simeq \cO_X \otimes_{\cA_X} \cM$. For $\mathcal{M} \in \Der(\cA_X)$ we call $\gr_\hbar(\mathcal{M})$ the graded module associated to $\mathcal{M}$. We have
\begin{equation*}
\gr_{\hbar} \mathcal{M} \simeq \cO_X \Lte_{\cA_X} \mathcal{M}.
\end{equation*}
\end{defi}

Finally, we have the following proposition.

\begin{prop}\label{double_adj}
The functor $\gr_\hbar$ and $\iota_g$ define pairs of adjoint functors $(\gr_\hbar,\iota_g)$ and $(\iota_g,\gr_\hbar[-1])$.
\end{prop}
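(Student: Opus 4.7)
The plan is to establish the two adjunctions separately, both of which ultimately trace back to the standard extension/restriction-of-scalars adjunction applied to the morphism of algebroid stacks $\cA_X \to \cO_X$, together with the existence of a length-one locally free resolution of $\cO_X$ as an $\cA_X$-module furnished by multiplication by $\hbar$.

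First, for the adjunction $(\gr_\hbar, \iota_g)$, I observe that $\iota_g : \Mod(\cO_X) \to \Mod(\cA_X)$ is by definition restriction of scalars along $\cA_X \to \cO_X$, while the underived functor $\cM \mapsto \cO_X \otimes_{\cA_X} \cM$ is its left adjoint. Since $\iota_g$ is exact, it coincides with its own derived functor, and by Proposition \ref{double_adj}'s definition $\gr_\hbar$ is precisely the left derived functor of $\cO_X \otimes_{\cA_X} (-)$. The adjunction $\Hom_{\Der(\cO_X)}(\gr_\hbar \cM,\cN) \simeq \Hom_{\Der(\cA_X)}(\cM,\iota_g \cN)$ then follows from the standard fact that a left adjoint between abelian categories descends to an adjunction between a left derived functor and the (already exact) right adjoint; concretely, one computes $\gr_\hbar \cM$ by a flat resolution of $\cM$ and uses the underived tensor-hom adjunction term by term.

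For the second adjunction $(\iota_g, \gr_\hbar[-1])$, the idea is to identify the right adjoint of $\iota_g$ as $R\fHom_{\cA_X}(\cO_X, -)$ (coinduction along $\cA_X \to \cO_X$) and then compute it. The key input is that $\hbar$ is a non-zero-divisor in $\cA_X$ and $\cO_X \simeq \cA_X/\hbar \cA_X$, so that the short exact sequence
\begin{equation*}
0 \to \cA_X \xrightarrow{\,\cdot\hbar\,} \cA_X \to \cO_X \to 0
\end{equation*}
provides a length-one resolution of $\cO_X$ by locally free $\cA_X$-modules. Applying $\fHom_{\cA_X}(-, \cM)$ term by term yields
\begin{equation*}
R\fHom_{\cA_X}(\cO_X, \cM) \simeq \bl \cM \xrightarrow{\,\cdot \hbar\,} \cM \br,
\end{equation*}
placed in degrees $0$ and $1$. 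On the other hand, tensoring the same resolution with $\cM$ gives $\gr_\hbar \cM \simeq [\cM \xrightarrow{\hbar} \cM]$ in degrees $-1$ and $0$, so $R\fHom_{\cA_X}(\cO_X, \cM) \simeq \gr_\hbar \cM [-1]$. Combined with the underived adjunction $\Hom_{\cA_X}(\iota_g \cN, \cM) \simeq \Hom_{\cO_X}(\cN, \fHom_{\cA_X}(\cO_X,\cM))$, this yields the claimed $(\iota_g, \gr_\hbar[-1])$ adjunction after taking global sections.

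I expect the main technical obstacle to be the passage from the local statement (the $\hbar$-resolution of $\cO_X$) to the global statement on the algebroid stack $\cA_X$: one must verify that this resolution, and the identification of $R\fHom_{\cA_X}(\cO_X,-)$ with $\gr_\hbar[-1]$, behaves functorially with respect to the twisted/stacky structure and is compatible with the descent that defines $\cA_X$. Once this is settled, the derived adjunctions are formal consequences of the underived ones together with the existence of sufficiently many flat (resp.\ injective) representatives, both of which are standard in the DQ-setup of \cite{KS3}.
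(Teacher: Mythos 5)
The paper states Proposition \ref{double_adj} as a recollection without giving a proof; it is essentially \cite[Prop.\ 2.3.6]{KS3} and your argument is precisely the standard one used there. Both adjunctions come from extension/restriction of scalars along $\cA_X \to \cO_X$, and the twist by $[-1]$ in the second pair is explained by the length-one locally free resolution $\cA_X \xrightarrow{\hbar} \cA_X$ of $\cO_X$, which makes $R\fHom_{\cA_X}(\cO_X, \cM)$ and $\cO_X \Lte_{\cA_X} \cM$ the same two-term complex $[\cM \xrightarrow{\hbar} \cM]$ placed in degrees $\{0,1\}$ and $\{-1,0\}$ respectively. Your degree bookkeeping is right, and the step from the underived coinduction adjunction to the derived one is justified because $\iota_g$ is exact, so its right adjoint $\fHom_{\cA_X}(\cO_X,\cdot)$ sends injectives to injectives. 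Two small remarks: the phrase ``after taking global sections'' is superfluous, since the adjunction you write is already for $\Hom$ rather than $\fHom$; and the worry about descent on the algebroid stack resolves immediately because multiplication by the central element $\hbar$ is a globally defined endomorphism of $\cA_X$ as a module over itself, so the two-term resolution exists at the stack level with no further patching required.
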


\subsection{Cohomologically complete modules}

We briefly present the notion of cohomologically complete module and state the few results that we need. Again, we refer the reader to \cite[§1.5]{KS3} for a detailed study of this notion. We denote by $\C^\hbar$ the ring of formal power series with coefficient in $\C$. Let $\cR$ be a $\C^\hbar$-algebroid stack without $\hbar$-torsion. We set $\cR_0=\cR/ \hbar \cR$ and $\cR^{loc}=\C^{\hbar,loc} \otimes_{\C^{\hbar}} \mathcal{R}$ where $\C^{\hbar,loc}$ is the field of formal Laurent's series.

\begin{defi}
An object $\cM \in \Der(\cR)$ is cohomologically complete if $\fRHom_{\cR}(\cR^{loc}, \cM) \simeq 0$. We write $\Der_{\cc}(\cR)$ for the full subcategory of $\Der(\cR)$ whose objects are the cohomologically complete modules.
\end{defi}

The category $\Der_{\cc}(\cR)$ is a triangulated subcategory of $\Der(\cR)$.

\begin{prop}[{\cite[Cor. 1.5.9]{KS3}}]\label{ccgr}
Let $\mathcal{M} \in \Der_{\cc}(\mathcal{R})$. If $\gr_{\hbar} \mathcal{M} \simeq 0$, then $\mathcal{M} \simeq 0$.
\end{prop}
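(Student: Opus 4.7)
The strategy is to leverage the exact triangle produced by multiplication by $\hbar$, and then combine it with the defining property of cohomological completeness applied to $\cR^{loc}$.

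First, I would set up the key distinguished triangle. Since $\cR$ has no $\hbar$-torsion, the sequence $0\to \cR\xrightarrow{\,\cdot\hbar\,}\cR\to\cR_0\to 0$ is exact, and after tensoring with $\mathcal{M}$ it yields a distinguished triangle
\begin{equation*}
\mathcal{M}\xrightarrow{\,\cdot\hbar\,}\mathcal{M}\longrightarrow \gr_\hbar\mathcal{M}\xrightarrow{+1}
\end{equation*}
in $\Der(\cR)$. Under the hypothesis $\gr_\hbar\mathcal{M}\simeq 0$, this triangle tells us that multiplication by $\hbar$ is an isomorphism on $\mathcal{M}$.

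Next, I would convert this invertibility of $\hbar$ into a statement about the localized algebroid. Because $\cR^{loc}=\C^{\hbar,loc}\otimes_{\C^\hbar}\cR$ is a flat $\cR$-algebra obtained by inverting $\hbar$, the invertibility of $\hbar$ on $\mathcal{M}$ implies that the canonical map $\mathcal{M}\to \cR^{loc}\Lte_{\cR}\mathcal{M}$ is an isomorphism; equivalently, $\mathcal{M}$ carries a natural $\cR^{loc}$-module structure compatible with its $\cR$-module structure.

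The third step, which I expect to be the main technical point, is to identify
\begin{equation*}
\fRHom_{\cR}(\cR^{loc},\mathcal{M})\simeq \mathcal{M}.
\end{equation*}
The idea is to use $\mathcal{M}\simeq \cR^{loc}\Lte_{\cR}\mathcal{M}$ together with the idempotency of localization, $\cR^{loc}\Lte_{\cR}\cR^{loc}\simeq \cR^{loc}$, and the extension of scalars adjunction, giving $\fRHom_{\cR}(\cR^{loc},\mathcal{M})\simeq \fRHom_{\cR^{loc}}(\cR^{loc}\Lte_{\cR}\cR^{loc},\mathcal{M})\simeq \fRHom_{\cR^{loc}}(\cR^{loc},\mathcal{M})\simeq \mathcal{M}$. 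One has to be slightly careful here because we are working with algebroid stacks rather than with honest sheaves of rings, but the sheaf-theoretic statement is local and reduces to the ring-theoretic identity.

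Finally, the cohomological completeness assumption gives $\fRHom_{\cR}(\cR^{loc},\mathcal{M})\simeq 0$, so combining with the previous step we conclude $\mathcal{M}\simeq 0$. The only genuinely delicate point is the sheaf-of-algebroids bookkeeping in the third step; the rest is formal manipulation of the $\hbar$-triangle and the definition of $\Der_{\cc}(\cR)$.
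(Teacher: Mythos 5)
The paper does not prove this statement itself; it is quoted directly from \cite[Cor. 1.5.9]{KS3}, so there is no internal proof to compare against. Your argument is correct and is essentially the canonical one: the $\hbar$-multiplication triangle $\cM\xrightarrow{\hbar}\cM\to\gr_\hbar\cM\xrightarrow{+1}$ shows that $\hbar$ acts invertibly on $\cM$, so the natural map $\cM\to\cR^{loc}\Lte_\cR\cM$ is an isomorphism (using flatness of $\cR^{loc}$ over $\cR$), giving $\cM$ an $\cR^{loc}$-module structure; the extension--restriction adjunction then yields $\fRHom_\cR(\cR^{loc},\cM)\simeq\cM$, and cohomological completeness forces the left-hand side to vanish. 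The only point worth spelling out a bit more carefully is the passage from a derived isomorphism $\hbar\colon\cM\to\cM$ to an honest object of $\Der(\cR^{loc})$, but this is handled exactly as you indicate via $\cR^{loc}\Lte_\cR\cM\simeq\varinjlim\bigl(\cM\xrightarrow{\hbar}\cM\xrightarrow{\hbar}\cdots\bigr)\simeq\cM$.
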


\begin{prop}\label{isogr}
Let $f:\mathcal{M} \to \mathcal{N}$ be a morphism of $\Der_{\cc}(\cR)$. If $\gr_{\hbar}(f)$ is an isomorphism then $f$ is an isomorphism. 
\end{prop}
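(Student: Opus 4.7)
The plan is a standard cone argument, reducing Proposition \ref{isogr} to the previous Proposition \ref{ccgr}.

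First, I would complete the morphism $f:\mathcal{M}\to\mathcal{N}$ into a distinguished triangle
\begin{equation*}
\mathcal{M} \xrightarrow{f} \mathcal{N} \to \mathcal{C} \to \mathcal{M}[1]
\end{equation*}
in $\Der(\cR)$, where $\mathcal{C}=\cone(f)$. Since $\Der_{\cc}(\cR)$ is stated to be a triangulated subcategory of $\Der(\cR)$, and both $\mathcal{M}$ and $\mathcal{N}$ lie in $\Der_{\cc}(\cR)$, the cone $\mathcal{C}$ is again cohomologically complete.

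Next, I would apply the functor $\gr_\hbar$, which is a left derived functor and hence triangulated. This yields a distinguished triangle
\begin{equation*}
\gr_\hbar \mathcal{M} \xrightarrow{\gr_\hbar(f)} \gr_\hbar \mathcal{N} \to \gr_\hbar \mathcal{C} \to (\gr_\hbar \mathcal{M})[1]
\end{equation*}
in $\Der(\cR_0)$. By hypothesis $\gr_\hbar(f)$ is an isomorphism, so the third term $\gr_\hbar \mathcal{C}$ is zero.

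Finally, since $\mathcal{C}\in\Der_{\cc}(\cR)$ and $\gr_\hbar \mathcal{C}\simeq 0$, Proposition \ref{ccgr} yields $\mathcal{C}\simeq 0$, which in turn implies that $f$ is an isomorphism in $\Der(\cR)$.

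The main (and essentially only) subtlety is verifying that $\Der_{\cc}(\cR)$ really is closed under cones, so that $\mathcal{C}$ is cohomologically complete and Proposition \ref{ccgr} applies; but this is exactly the fact, already recorded just before the statement, that $\Der_{\cc}(\cR)$ is a triangulated subcategory of $\Der(\cR)$. No further input is required.
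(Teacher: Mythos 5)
Your proof is correct, and it is the standard cone argument that the paper clearly has in mind: the paper states Proposition~\ref{isogr} without proof, evidently regarding it as an immediate consequence of Proposition~\ref{ccgr} together with the fact (just recorded) that $\Der_{\cc}(\cR)$ is a triangulated subcategory. Your reduction — form the distinguished triangle on $f$, note the cone $\mathcal{C}$ is cohomologically complete, apply $\gr_\hbar$ to conclude $\gr_\hbar\mathcal{C}\simeq 0$, then invoke Proposition~\ref{ccgr} — is exactly that routine argument, with the right point flagged as the only thing to check.
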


By Proposition 1.5.6 of \cite{KS3}, for any object $\cM$ of $\Der(\cR)$, the object $\fRHom_{\cR}((\cR^{loc}/\cR)[-1],\cM)$ belongs to $\Der_{cc}(\cR)$.

\begin{defi}\label{def:cohocomp}
We denote by $(\cdot)^{\cc}$\index{cc@$(\cdot)^{\cc}$} the functor 
\begin{equation*}
\fRHom_{\cR}((\cR^{loc}/\cR)[-1],\cdot): \Der(\cR) \to \Der(\cR). 
\end{equation*}
We call this functor the functor of cohomological completion.
\end{defi}

The name of functor of cohomological completion is also justified by the fact that $(\cdot)^{\cc} \circ (\cdot)^{\cc} \simeq (\cdot)^{cc}$.

There is a natural transformation
\begin{equation} \label{morcc}
cc:\id \to (\cdot)^{cc}.
\end{equation}
It enjoys the following property.
\begin{prop}[{\cite[Prop. 3.8]{dgaff}}]\label{cciso}
The morphism of functors 
\begin{equation*}
\gr_\hbar(cc):\gr_\hbar \circ \id \to\gr_\hbar \circ (\cdot)^{cc}
\end{equation*}
is an isomorphism in $\Der(\cR_0)$.
\end{prop}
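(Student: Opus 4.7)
The plan is to reduce the claim to a statement about the ``non-cohomologically complete'' part of $\cM$, namely the fiber of $cc$, and to show this fiber becomes zero after applying $\gr_\hbar$.

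First I would extract a distinguished triangle from the short exact sequence $\cR \to \cR^{loc} \to \cR^{loc}/\cR \xrightarrow{+1}$, shifted to give $(\cR^{loc}/\cR)[-1] \to \cR \to \cR^{loc} \xrightarrow{+1}$. Applying $\fRHom_{\cR}(\cdot,\cM)$ and using $\fRHom_{\cR}(\cR,\cM) \simeq \cM$ and Definition \ref{def:cohocomp}, I obtain a distinguished triangle
\begin{equation*}
\fRHom_{\cR}(\cR^{loc},\cM) \longrightarrow \cM \xrightarrow{\;cc\;} \cM^{cc} \xrightarrow{+1}
\end{equation*}
in which the middle arrow is, by construction, the natural transformation \eqref{morcc}. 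Since $\gr_\hbar$ is a triangulated functor, $\gr_\hbar(cc)$ is an isomorphism if and only if $\gr_\hbar\bigl(\fRHom_{\cR}(\cR^{loc},\cM)\bigr)\simeq 0$.

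Next I would prove this vanishing using the standard Koszul presentation of $\cR_0$. Since $\cR$ has no $\hbar$-torsion, the complex $[\cR \xrightarrow{\hbar} \cR]$ concentrated in degrees $-1,0$ is a locally free resolution of $\cR_0 = \cR/\hbar\cR$ as a (left) $\cR$-module. Therefore, for any $\cN \in \Der(\cR)$,
\begin{equation*}
\gr_\hbar \cN \;\simeq\; \cR_0 \Lte_{\cR} \cN \;\simeq\; \bigl[\cN \xrightarrow{\;\hbar\;} \cN\bigr].
\end{equation*}
Taking $\cN = \fRHom_{\cR}(\cR^{loc},\cM)$, the key observation is that multiplication by $\hbar$ is invertible on $\cR^{loc}$, hence also on $\fRHom_{\cR}(\cR^{loc},\cM)$. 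Consequently the two-term complex above is acyclic, giving the desired vanishing.

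The only delicate point is making sure the Koszul computation of $\gr_\hbar$ is valid in the algebroid-stack setting; this is essentially formal because both sides localize, and on each local chart of $X$ the algebra $\cR$ is $\hbar$-torsion free, so the resolution $[\cR \xrightarrow{\hbar}\cR] \to \cR_0$ is globally well defined. Apart from this bookkeeping the proof is short, and the main ``content'' is the single observation that $\hbar$ acts invertibly on $\fRHom_{\cR}(\cR^{loc},\cdot)$.
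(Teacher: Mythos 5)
The paper does not give its own proof of this proposition; it cites it directly from \cite[Prop.\ 3.8]{dgaff}. Your argument is correct and is the standard one: passing to the third term $\fRHom_{\cR}(\cR^{loc},\cM)$ of the triangle defining $(\cdot)^{cc}$, observing that $\hbar$ acts invertibly on it because $\hbar$ is central and invertible on $\cR^{loc}$, and concluding $\gr_\hbar$ kills it since $\gr_\hbar\cN \simeq \bigl[\cN \xrightarrow{\hbar} \cN\bigr]$ is the cone of multiplication by $\hbar$. This is essentially the argument in the cited reference (and the same mechanism underlying the surrounding results of \cite[\S 1.5]{KS3}); the only cosmetic remark is that, for $\cR_0 \Lte_{\cR} \cdot$, one should view $[\cR \xrightarrow{\hbar} \cR]$ as a resolution of $\cR_0$ as a \emph{right} $\cR$-module, which is unproblematic because $\hbar$ is central.
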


\subsection{Compactly generated categories and thick subcategories}

In this subsection, we review a few facts about compactly generated categories and thick subcategories. These facts play an essential role in the proof of Theorems \ref{thm:kercoh} and \ref{finalmukai}. A classical reference is \cite{Nee_book}. We also refer to \cite[§ 2]{BVdB}.

\begin{defi}
Let $\mathcal{T}$ be a triangulated category. Let $\mathfrak{G}=\lbrace G_i \rbrace_{i \in I}$ be a set of objects of $\mathcal{T}$. One says that $\mathfrak{G}$ generates $\mathcal{T}$ if the following condition is satisfied.
\begin{center}
If $F \in \mathcal{T}$ is such that for every $G_i \in \mathfrak{G}$ and $n \in \Z$ $\Hom_{\mathcal{T}}(G_i[n],F)=0$ then $F \simeq 0$.
\end{center} 
%for every $F \in \mathcal{T}$ such that $\Hom_{\mathcal{T}}(G_i[n],F)=0$ for every $G_i \in \mathfrak{G}$ and $n \in \Z$, we have $F \simeq 0$.
\end{defi}

\begin{defi}
Assume that $\mathcal{T}$ is a cocomplete triangulated category. 

\begin{enumerate} [(i)]
\item An object $L$ in $\mathcal{T}$ is compact if the functor $\Hom_{\mathcal{T}}(L, \cdot)$ commutes with coproducts. We write $\mathcal{T}^{c}$ for the full subcategory of $\mathcal{T}$ whose objects are the compact objects.

\item The category $\mathcal{T}$ is compactly generated if it is generated by a set of compact objects.
\end{enumerate}
\end{defi}

\begin{defi} 
\begin{enumerate}[(i)]
\item A full subcategory of a triangulated category is thick if it is closed under isomorphisms and contains all direct summands of its objects. 

\item The thick envelop $\langle \mathcal{S} \rangle$ of a set of objects $\mathcal{S}$ of a triangulated category $\mathcal{T}$ is the smallest thick triangulated subcategory of $\mathcal{T}$ containing $\mathcal{S}$. 

\item One says that $\mathcal{S}$ classicaly generates $\mathcal{T}$ if its thick envelop is equal to $\mathcal{T}$.
\end{enumerate}
\end{defi}

\begin{thm}[{\cite{Nee_comp} and \cite{Rav}}] \label{Rav_Nee}
Let $\mathcal{T}$ be compactly generated triangulated category. Then a set of compact objects $\mathcal{S}$ of $\mathcal{T}$ classically generates $\mathcal{T}^{c}$ if and only if it generates $\mathcal{T}$.
\end{thm}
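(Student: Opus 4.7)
The plan is to treat the two directions separately: $(\Rightarrow)$ is essentially formal, while $(\Leftarrow)$ is the content of Neeman's theorem and requires a Bousfield-localization argument.

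For the easy direction $(\Rightarrow)$, I would fix $F \in \mathcal{T}$ and consider
\[ \mathcal{N}_F = \{X \in \mathcal{T} : \Hom_{\mathcal{T}}(X[n], F) = 0 \text{ for all } n \in \Z\}. \]
Since $\Hom_{\mathcal{T}}(-,F)$ is a cohomological functor which sends direct summands to direct summands, $\mathcal{N}_F$ is a thick triangulated subcategory of $\mathcal{T}$. Assuming $\mathcal{S} \subset \mathcal{N}_F$ and that $\mathcal{S}$ classically generates $\mathcal{T}^c$, the intersection $\mathcal{N}_F \cap \mathcal{T}^c$ is thick in $\mathcal{T}^c$, contains $\mathcal{S}$, and so equals $\mathcal{T}^c$. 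Since $\mathcal{T}$ is compactly generated by hypothesis, there exist compact generators, all of which lie in $\mathcal{T}^c \subset \mathcal{N}_F$, and this forces $F \simeq 0$. Hence $\mathcal{S}$ generates $\mathcal{T}$.

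For the converse $(\Leftarrow)$, I would let $\mathcal{L}$ be the smallest localizing (i.e.\ triangulated and closed under arbitrary coproducts) subcategory of $\mathcal{T}$ containing $\mathcal{S}$. The first substantive step is to show $\mathcal{L} = \mathcal{T}$. The idea is that $\mathcal{L}$ is itself compactly generated by $\mathcal{S}$, so by Brown representability the inclusion $\mathcal{L} \hookrightarrow \mathcal{T}$ admits a right adjoint; every $X \in \mathcal{T}$ then sits in a distinguished triangle $L \to X \to R \to L[1]$ with $L \in \mathcal{L}$ and $R$ right-orthogonal to $\mathcal{L}$, in particular to each shift of each $S \in \mathcal{S}$. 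Since $\mathcal{S}$ generates $\mathcal{T}$, this forces $R \simeq 0$, so $X \in \mathcal{L}$.

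The last and hardest step is the identification $\mathcal{T}^c = \mathcal{L}^c = \langle \mathcal{S} \rangle$. This is the combinatorial heart of the Neeman--Ravenel theorem, and I expect it to be the main obstacle. The approach is to write any object of $\mathcal{L}$ as a homotopy colimit of a tower $X_0 \to X_1 \to \cdots$ whose initial term $X_0 \in \add(\mathcal{S})$ and whose successive cones are coproducts of shifts of objects of $\mathcal{S}$. Given $K \in \mathcal{L}^c$, compactness of $K$ allows the identity map $K \to \mathrm{hocolim}\, X_i \simeq K$ to factor through some $X_n$, after which a telescope/retract argument exhibits $K$ as a direct summand of an object built from $\mathcal{S}$ by finitely many cones and shifts, hence $K \in \langle \mathcal{S} \rangle$. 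The reverse inclusion $\langle \mathcal{S} \rangle \subset \mathcal{T}^c$ is automatic since compact objects form a thick subcategory and $\mathcal{S} \subset \mathcal{T}^c$.
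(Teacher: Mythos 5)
The paper does not prove this statement; it is quoted from Neeman and Ravenel, so there is no in-text proof to compare against. Judged on its own terms, your proposal has the right overall architecture, and the easy direction $(\Rightarrow)$ is complete and correct: the orthogonal category $\mathcal{N}_F$ is thick, it contains $\mathcal{S}$, hence contains $\langle\mathcal{S}\rangle = \mathcal{T}^c$, hence contains the compact generators guaranteed by the compactly-generated hypothesis, so $F\simeq 0$.

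For the converse, your two-stage plan (first $\mathcal{L}=\mathcal{T}$ via the Bousfield localization triangle, then $\mathcal{T}^c = \langle\mathcal{S}\rangle$) is exactly the standard route, and the first stage is sound. But the second stage, which you yourself flag as the main obstacle, is not actually carried out, and it is precisely where the content of Neeman's theorem lives. Knowing that $K$ is a retract of some finite stage $X_n$ of the tower does not yet put $K$ in $\langle\mathcal{S}\rangle$, because each $X_i$ is built from possibly \emph{infinite} coproducts of shifts of objects of $\mathcal{S}$, and $\langle\mathcal{S}\rangle$ is closed only under finite ones. The heart of the argument is a second, inductive use of compactness: one must replace the coproducts at every stage of the tower $X_0\to\cdots\to X_n$ by finite subcoproducts, in a coherent way that preserves both the map $K\to X_n$ and a retraction back to $K$. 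This ``finitization'' is delicate — compactness of $K$ lets you factor a map \emph{out of} $K$ through a finite subcoproduct, but you also need to arrange the retraction $X_n\to K$ to survive the replacement — and it is the lemma one should not expect to be able to wave away in a sentence. Also, the statement that $X_0\in\add(\mathcal{S})$ is too strong in general for the hocolim presentation of an arbitrary object: the initial term should be allowed to be an arbitrary (possibly infinite) coproduct of shifts of objects of $\mathcal{S}$, and the distinction between infinite and finite coproducts is the entire difficulty. So: right skeleton, correct easy direction, but the hard half is delegated to exactly the lemma the theorem is cited for.
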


The next result is probably well known. We include a proof for the sake of completeness.

\begin{prop} \label{thickiso}
Let $F, \; G: \mathcal{T} \to \mathcal{S}$ be two functors of triangulated categories and $\alpha:F \Rightarrow G$ a natural transformation between them. Then the full subcategory $\mathcal{T}_\alpha$ of $\mathcal{T}$ whose objects are the $X$ such that $\alpha_X:F(X) \to G(X)$ is an isomorphism is a thick subcategory of $\mathcal{T}$.
\end{prop}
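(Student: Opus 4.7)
The proposition is a general fact about triangulated functors and natural transformations, and the proof is essentially a verification of the three closure properties defining a thick subcategory: closure under isomorphisms, being a triangulated subcategory, and closure under direct summands. I would proceed in the order dictated by increasing subtlety.

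First, closure under isomorphisms of $\mathcal{T}_\alpha$ is immediate from the naturality of $\alpha$: if $u:X \to X'$ is an isomorphism in $\mathcal{T}$, then the square $G(u) \circ \alpha_X = \alpha_{X'} \circ F(u)$ shows that $\alpha_X$ is an iso iff $\alpha_{X'}$ is. The same naturality argument, applied to the shift isomorphisms coming with $F$ and $G$ as triangulated functors, shows that $\mathcal{T}_\alpha$ is stable by $[n]$ for every $n \in \Z$.

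Next I would verify that $\mathcal{T}_\alpha$ is a triangulated subcategory. Given a distinguished triangle $X \to Y \to Z \xrightarrow{+1}$ in $\mathcal{T}$ with $X, Y \in \mathcal{T}_\alpha$, the naturality of $\alpha$ together with the exactness of $F$ and $G$ yields a morphism of distinguished triangles
\begin{equation*}
\xymatrix{
F(X) \ar[r] \ar[d]_{\alpha_X} & F(Y) \ar[r] \ar[d]_{\alpha_Y} & F(Z) \ar[r] \ar[d]_{\alpha_Z} & F(X)[1] \ar[d]_{\alpha_{X[1]}} \\
G(X) \ar[r] & G(Y) \ar[r] & G(Z) \ar[r] & G(X)[1]
}
\end{equation*}
in $\mathcal{S}$. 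Since $\alpha_X$ and $\alpha_Y$ are isomorphisms by assumption, the triangulated version of the five-lemma forces $\alpha_Z$ to be an isomorphism, so $Z \in \mathcal{T}_\alpha$.

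Finally, for closure under direct summands, suppose $X \simeq Y \oplus Z$ with $X \in \mathcal{T}_\alpha$. As $F$ and $G$ are additive (being triangulated), $F(X) \simeq F(Y) \oplus F(Z)$, $G(X) \simeq G(Y) \oplus G(Z)$, and under these identifications $\alpha_X = \alpha_Y \oplus \alpha_Z$. In any additive category a direct sum of morphisms is an isomorphism if and only if each summand is, so $\alpha_Y$ (and $\alpha_Z$) must be isomorphisms, hence $Y \in \mathcal{T}_\alpha$. There is no real obstacle in this proof; the only point that requires a named ingredient is the triangulated five-lemma in the second step, and everything else follows from naturality and additivity.
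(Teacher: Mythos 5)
Your proof is correct and follows essentially the same strategy as the paper: the only step that requires real work is closure under direct summands, and both arguments rest on naturality applied to the section and retraction maps of the biproduct. You package this a bit more abstractly (via additivity of $F,G$ and the general fact that a direct sum of morphisms is an isomorphism iff each summand is), whereas the paper writes down the explicit inverse $F(p_Y)\circ\alpha_{Y\oplus Z}^{-1}\circ G(i_Y)$, but these are the same argument; you also spell out the five-lemma step for the triangulated-subcategory part, which the paper simply asserts.
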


\begin{proof}
The category $\mathcal{T}_\alpha$ is triangulated and is closed under isomorphism. Let $X$ be an object of $\mathcal{T}_\alpha$ and $Y$ and $Z$ two objects of $\mathcal{T}$ such that $X \simeq Y \oplus Z$. By definition of the direct sum there is a map $i_Y: Y \to Y \oplus Z$ and a map $p_Y:  Y \oplus Z \to Y$ such that $p_Y \circ i_Y= \id_Y$. Since $\alpha$ is a natural transformation we have the following commutative diagram.
\begin{equation*}
\xymatrix{ F(Y) \ar[r]^-{\alpha_Y} \ar[d]_-{F(i_Y)} & G(Y)  \ar[d]^-{G(i_Y)}\\
F(Y \oplus Z) \ar[r]^-{\sim}_-{\alpha_{Y \oplus Z}} \ar[d]_-{F(p_{Y})} & G(Y \oplus Z)  \ar[d]^-{G(p_{Y})}\\
F(Y) \ar[r]^-{\alpha_Y}  & G(Y).
}
\end{equation*}  
It follows that $F(p_Y) \circ \alpha_{Y \oplus Z}^{-1} \circ G(i_Y)$ is the inverse of $\alpha_Y$. Thus, $Y$ belongs to $\mathcal{T}_\alpha$.
It follows that $\mathcal{T}_\alpha$ is a thick subcategory of $\mathcal{T}$.
\end{proof}

\subsection{Qcc modules}
 We review some facts about qcc modules. They may be considered as a substitute to quasi-coherent sheaves in the quantized setting. For a more detailed study one refers to \cite{dgaff}. In this subsection, $(X, \cO_X)$ is a smooth complex algebraic variety endowed with a DQ-algebroid $\cA_X$. We denote by $\Der_{\qcoh}(\cO_X)$ the derived category of sheaves with quasi-coherent cohomology and by $\Der^b_{\coh}(\cO_X)$ (resp. $\Der^b_{\coh}(\cA_X)$) the derived category of bounded complexes of $\cO_X$-modules (resp. $\cA_X$-modules) with coherent cohomology.  
 
\begin{defi}
An object $\mathcal{M} \in \Der(\cA_X)$ is $\qcc$ if it is cohomologically complete and $\gr_\hbar \cM \in \Der_{\qcoh}(\cO_X)$. The full subcategory of $\Der(\cA_X)$ formed by $\qcc$ modules is denoted by $\Der_{\qcc}(\cA_X)$\index{deriveqcc@$\Der_{\qcc}(\cA_X)$}.
\end{defi}

One easily shows that the category $\Der_{\qcc}(\cA_X)$ is a triangulated subcategory of $\Der(\cA_X)$.

%\begin{cor}\label{compge}
%If $\mathcal{G}$ is a compact generator of $\Der_{\qcoh}(\cO_X)$ then $\iota_g(\mathcal{G})$ is a compact generator of $\Der_{\qcc}(\cA_X)$. In particular, the category $\Der_{\qcc}(\cA_X)$ is compactly generated.
%\end{cor}
%
%\begin{proof}
%See \cite[Cor. 3.15]{dgaff} for the proof.
%\end{proof}

\begin{prop}[{\cite[Cor. 3.14]{dgaff}}]
If $\cN \in \Der_{\qcoh}(\cO_X)$, then $\iota_g(\cN) \in \Der_{\qcc}(\cA_X)$.
\end{prop}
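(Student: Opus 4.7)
The proposition asks us to verify the two defining properties of $\qcc$: namely that $\iota_g(\cN)$ is cohomologically complete, and that $\gr_\hbar \iota_g(\cN)$ has quasi-coherent cohomology. I would treat these two points separately.

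For cohomological completeness, the key observation is that $\iota_g$ factors through the quotient $\cA_X \to \cA_X/\hbar\cA_X \simeq \cO_X$, so $\hbar$ acts as $0$ on the $\cA_X$-module $\iota_g(\cN)$ (and hence on every cohomology object of $\iota_g(\cN)$). On the other hand, on $\cA_X^{loc}$ the element $\hbar$ is invertible. Consequently the morphism ``multiplication by $\hbar$'' on $\fRHom_{\cA_X}(\cA_X^{loc},\iota_g(\cN))$ is both an isomorphism (pulled from the source) and zero (pulled from the target), forcing $\fRHom_{\cA_X}(\cA_X^{loc},\iota_g(\cN)) \simeq 0$. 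This is exactly the definition of cohomological completeness.

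For the quasi-coherence of $\gr_\hbar\iota_g(\cN)$, I would compute it explicitly using the two-term locally free resolution
\begin{equation*}
0 \to \cA_X \xrightarrow{\hbar} \cA_X \to \cO_X \to 0
\end{equation*}
(valid since $\cA_X$ has no $\hbar$-torsion). Tensoring with $\iota_g(\cN)$, and using that $\hbar$ acts as zero on $\iota_g(\cN)$, we obtain that $\gr_\hbar \iota_g(\cN) \simeq \cO_X \Lte_{\cA_X} \iota_g(\cN)$ is quasi-isomorphic to $\cN \oplus \cN[1]$ in $\Der(\cO_X)$. Since $\cN \in \Der_{\qcoh}(\cO_X)$ by assumption and $\Der_{\qcoh}(\cO_X)$ is a triangulated subcategory stable under shifts and direct sums, this proves quasi-coherence of the cohomology.

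Combining these two facts gives $\iota_g(\cN) \in \Der_{\qcc}(\cA_X)$. The main potential obstacle is a careful handling of the $\hbar$-action in the algebroid stack setting rather than the naive sheaf-of-algebras setting, and correctly interpreting ``multiplication by $\hbar$ is zero on $\iota_g(\cN)$'' at the derived level; but this can be checked locally where $\cA_X$ is equivalent to a sheaf of algebras and $\iota_g$ is literally restriction of scalars along $\cA_X \twoheadrightarrow \cO_X$, making the $\hbar$-action on $\iota_g(\cN)$ zero by construction.
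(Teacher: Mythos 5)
The proof is correct, and it is the natural argument (the same one behind the cited reference). For cohomological completeness, the key observation is that in the $\C^\hbar$-linear derived category the endomorphism of $\fRHom_{\cA_X}(\cA_X^{loc}, \iota_g\cN)$ induced by $\hbar$ is unambiguous---pre-composition with $\hbar$ on $\cA_X^{loc}$ and post-composition with $\hbar$ on $\iota_g\cN$ agree---so being simultaneously an isomorphism (since $\hbar$ is invertible on $\cA_X^{loc}$) and zero (since $\iota_g\cN$ is $\hbar$-torsion) forces the complex to vanish; and the computation $\gr_\hbar\iota_g(\cN)\simeq\cN\oplus\cN[1]$ via the length-one Koszul resolution $0\to\cA_X\xrightarrow{\hbar}\cA_X\to\cO_X\to 0$ is exactly the standard identity $\gr_\hbar\circ\iota_g\simeq\id\oplus\id[1]$ on $\Der(\cO_X)$, from which quasi-coherence of the cohomology is immediate.
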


The functors $\gr_\hbar$ and $\iota_g$ induce the following functors.

\begin{equation}\label{map:qcccoh}
\xymatrix{ \Der_{\qcc}(\cA_X) \ar@<.4ex>[r]^-{\gr_\hbar} & \Der_{\qcoh}(\cO_X) \ar@<.4ex>[l]^-{\iota_g}.}
\end{equation}

We have the following proposition.

\begin{prop}\label{prop:compres} 
Let $X$ be a smooth complex algebraic variety endowed with a DQ-algebroid $\cA_X$. The functors $\iota_g:\Der_{\qcoh}(\cO_X) \to \Der_{\qcc}(\cA_X)$ and $\gr_\hbar: \Der_{\qcc}(\cA_X) \to \Der_{\qcoh}(\cO_X)$ preserve compact generators.
\end{prop}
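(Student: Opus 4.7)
The strategy is to exploit the two adjunctions $(\gr_\hbar, \iota_g)$ and $(\iota_g, \gr_\hbar[-1])$ from Proposition \ref{double_adj}. Preservation of compactness will follow from the general principle that a functor preserves compact objects whenever its right adjoint preserves coproducts, while preservation of generators reduces, via these same adjunctions, to Proposition \ref{ccgr} in the case of $\iota_g$ and to conservativity of $\iota_g$ in the case of $\gr_\hbar$.

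For compactness, observe that $\gr_\hbar \simeq \cO_X \Lte_{\cA_X}(\cdot)$ is a derived tensor product and thus commutes with arbitrary coproducts, while $\iota_g$ is exact and commutes with direct sums of sheaves. Granting from \cite{dgaff} that coproducts in $\Der_{\qcc}(\cA_X)$ are computed compatibly with these operations (which may require a cohomological completion step), both right adjoints $\iota_g$ and $\gr_\hbar[-1]$ preserve coproducts, and their left adjoints therefore preserve compact objects.

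For the generating property, let $\mathfrak{G} = \{G_i\}$ be a set of generators of $\Der_{\qcoh}(\cO_X)$ and suppose $\mathcal{F} \in \Der_{\qcc}(\cA_X)$ satisfies $\Hom(\iota_g(G_i)[n], \mathcal{F}) = 0$ for every $i$ and every $n \in \Z$. The adjunction $(\iota_g, \gr_\hbar[-1])$ rewrites this as $\Hom(G_i[n-1], \gr_\hbar(\mathcal{F})) = 0$, forcing $\gr_\hbar(\mathcal{F}) \simeq 0$. Since $\mathcal{F}$ is cohomologically complete, Proposition \ref{ccgr} gives $\mathcal{F} \simeq 0$, so $\{\iota_g(G_i)\}$ generates $\Der_{\qcc}(\cA_X)$. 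Symmetrically, given a set of generators $\{G_i\}$ of $\Der_{\qcc}(\cA_X)$ and $\mathcal{F} \in \Der_{\qcoh}(\cO_X)$ with $\Hom(\gr_\hbar(G_i)[n], \mathcal{F}) = 0$ for all $i, n$, the adjunction $(\gr_\hbar, \iota_g)$ gives $\iota_g(\mathcal{F}) \simeq 0$. Since $\iota_g$ is exact and faithful at the level of modules, a cohomology-by-cohomology argument shows it is conservative on derived categories, whence $\mathcal{F} \simeq 0$.

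The main obstacle is the compactness step: infinite direct sums of cohomologically complete modules are not in general cohomologically complete, so coproducts in $\Der_{\qcc}(\cA_X)$ do not a priori agree with the naive direct sum taken in $\Der(\cA_X)$. One must therefore carefully appeal to the coproduct structure on $\Der_{\qcc}$ set up in \cite{dgaff} and check that $\iota_g$ interacts correctly with this (possibly completed) coproduct. The generator half of the argument is, by contrast, a more or less formal manipulation of the two adjunctions combined with cohomological completeness.
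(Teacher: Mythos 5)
Your treatment of the \emph{generation} half matches the paper's: the argument for $\gr_\hbar$ via the $(\gr_\hbar,\iota_g)$ adjunction and conservativity of $\iota_g$ on cohomology is verbatim the paper's argument, and the argument for $\iota_g$ via the $(\iota_g,\gr_\hbar[-1])$ adjunction and Proposition~\ref{ccgr} is the natural counterpart (the paper simply cites \cite[Cor.~3.15]{dgaff} for this case).

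Where you genuinely diverge is on \emph{compactness}. You invoke the abstract principle ``left adjoint preserves compacts when the right adjoint preserves coproducts,'' and are therefore forced to wrestle with the non-obvious structure of coproducts in $\Der_{\qcc}(\cA_X)$ --- as you rightly flag, a naive infinite direct sum of cohomologically complete objects need not be cohomologically complete, so the internal coproduct is a completed one, and you must then check that $\iota_g$ and $\gr_\hbar$ interact correctly with it. (For $\iota_g$ this is actually clean: by \cite[Cor.~3.14]{dgaff} the direct sum $\bigoplus \iota_g(N_i) \simeq \iota_g(\bigoplus N_i)$ is already qcc, so the naive sum is the coproduct. For $\gr_\hbar$ one uses Proposition~\ref{cciso} to strip away the completion.) The paper instead short-circuits all of this by appealing to the concrete classification in Theorem~\ref{qccompact}: a compact object $\mathcal{G}$ of $\Der_{\qcc}(\cA_X)$ lies in $\Der^b_{\coh}(\cA_X)$, hence $\gr_\hbar\mathcal{G}$ is a bounded coherent complex, hence perfect, hence compact on a smooth variety. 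No discussion of coproducts is needed. Your route is more general and self-contained for the $\iota_g$ direction (which the paper outsources), but for $\gr_\hbar$ the paper's route is shorter and avoids the delicate coproduct bookkeeping you leave as a loose end. Both arguments are sound, but if you intend to fill in the coproduct step you should make explicit that the coproduct in $\Der_{\qcc}$ is the cohomological completion of the ambient direct sum, verify its universal property, and then apply Proposition~\ref{cciso} --- none of which is needed in the paper's more concrete approach.
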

%\item If $\mathcal{G}$ is a compact generator of $\Der_{\qcoh}(\cO_X)$ then $\iota_g(\mathcal{G})$ is a compact generator of $\Der_{\qcc}(\cA_X)$. In particular, the category $\Der_{\qcc}(\cA_X)$ is compactly generated.
%
%\item If $\mathcal{G}$ is a compact generator of $\Der_{\qcc}(\cA_X)$ then $\gr_\hbar(\mathcal{G})$ is a compact generator of $\Der_{\qcoh}(\cO_X)$.
\begin{proof}
\begin{enumerate}[(i)]
\item We refer to \cite[Cor. 3.15]{dgaff} for the case of the functor $\iota_g$.

\item Let us prove the claim for the functor $\gr_\hbar$. 
Let $\cM \in \Der_{\qcoh}(\cO_X)$ such that $\RHom_{\cO_X}(\gr_\hbar \mathcal{G},\cM)\simeq 0$. Then, we have
\begin{align*}
\RHom_{\cO_X}(\gr_\hbar \mathcal{G},\cM) &\simeq \RHom_{\cA_X}(  \mathcal{G}, \iota_g \cM)\\
&\simeq 0.
\end{align*}
It follows that $\iota_g(\cM) \simeq 0$. Hence, for every $i \in \Z, \, \Hn^i(\iota_g \cM) \simeq 0$. Since $\iota_g:\Mod(\cO_X) \to \Mod(\cA_X)$ is fully faithful and exact, we have $\Hn^i(\cM)\simeq 0$. It follows that $\cM \simeq 0$. Moreover, $\gr_\hbar \mathcal{G}$ is coherent and on a smooth algebraic variety coherent sheaves are compact. 
\end{enumerate}

\end{proof}

\begin{Rem}
We know by \cite{BVdB} that for a complex algebraic variety the category $\Der_{\qcoh}(\cO_X)$ is compactly generated by a single compact object i.e by a perfect complex. As shown in \cite{dgaff}, this implies in particular that $\Der_{\qcc}(\cA_X)$ is compactly generated by a single compact object.
\end{Rem}

\begin{cor}\label{compcohdq}
Let $X$ be a smooth complex algebraic variety endowed with a DQ-algebroid.
\begin{enumerate}[(i)]
\item If $\mathcal{G}$ is a compact generator of $\Der_{\qcoh}(\cO_X)$ then $\gr_\hbar \iota_g \mathcal{G}$ is still a compact generator of $\Der_{\qcoh}(\cO_X)$.
\item One has $\Der^b_{\coh}(\cO_X)= \langle \gr_\hbar \iota_g(\mathcal{G}) \rangle $.
\end{enumerate}
\end{cor}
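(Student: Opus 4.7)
The plan is to derive both statements by combining Proposition \ref{prop:compres} with the Neeman--Ravenel theorem (Thm. \ref{Rav_Nee}).

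For part (i), the strategy is a two-step application of Proposition \ref{prop:compres}. Starting from $\mathcal{G}$, a compact generator of $\Der_{\qcoh}(\cO_X)$, the functor $\iota_g \colon \Der_{\qcoh}(\cO_X) \to \Der_{\qcc}(\cA_X)$ preserves compact generators, so $\iota_g \mathcal{G}$ is a compact generator of $\Der_{\qcc}(\cA_X)$. Applying the same proposition to the functor $\gr_\hbar \colon \Der_{\qcc}(\cA_X) \to \Der_{\qcoh}(\cO_X)$, we conclude that $\gr_\hbar \iota_g \mathcal{G}$ is a compact generator of $\Der_{\qcoh}(\cO_X)$.

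For part (ii), the idea is to identify the compact objects of $\Der_{\qcoh}(\cO_X)$ with $\Der^b_{\coh}(\cO_X)$ and then invoke Theorem \ref{Rav_Nee}. Since $X$ is a smooth complex algebraic variety, the compact objects of $\Der_{\qcoh}(\cO_X)$ are exactly the perfect complexes, which in the smooth case coincide with $\Der^b_{\coh}(\cO_X)$. By (i), the single object $\gr_\hbar \iota_g \mathcal{G}$ is a compact object generating $\Der_{\qcoh}(\cO_X)$, so Theorem \ref{Rav_Nee} applied to the set $\mathcal{S} = \{\gr_\hbar \iota_g \mathcal{G}\}$ shows that $\mathcal{S}$ classically generates the category of compact objects, i.e. $\langle \gr_\hbar \iota_g \mathcal{G} \rangle = \Der^b_{\coh}(\cO_X)$.

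There is no significant obstacle here; the only subtlety worth being careful about is the identification $\Der_{\qcoh}(\cO_X)^c = \Der^b_{\coh}(\cO_X)$, which relies on smoothness of $X$, and the fact that this identification was already implicitly used at the end of the proof of Proposition \ref{prop:compres} when asserting that coherent sheaves on a smooth variety are compact.
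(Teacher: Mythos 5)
Your proof is correct and follows exactly the paper's own route: part (i) by the two-step application of Proposition \ref{prop:compres} to $\iota_g$ and then $\gr_\hbar$, and part (ii) by identifying compact objects with $\Der^b_{\coh}(\cO_X)$ on a smooth variety and invoking Theorem \ref{Rav_Nee}. The paper's proof is just terser, leaving the two-step composition implicit.
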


\begin{proof}
\begin{enumerate}[(i)]
\item Follows immediatly from Proposition \ref{prop:compres}.
\item On a complex smooth algebraic variety the category of compact objects is equivalent to $\Der^b_{\coh}(\cO_X)$. Hence the results follows from Theorem \ref{Rav_Nee}.
\end{enumerate}
\end{proof}

Finally, let us recall the following result from \cite{dgaff}.

\begin{thm}\label{qccompact}
An object $\mathcal{M}$ of $\Der_{\qcc}(\cA_X)$ is compact if and only if $ \mathcal{M} \in \Der^{b}_{\coh}(\cA_X)$ and $\cA_X^{loc} \otimes_{\cA_X} \mathcal{M}=0$.
\end{thm}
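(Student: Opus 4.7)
The plan is to prove both implications separately, relying on the thick-envelope description of compact objects (Theorem \ref{Rav_Nee}) and on the preservation of compact generators by $\iota_g$ and $\gr_\hbar$ (Proposition \ref{prop:compres}).

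For the forward direction I would exploit Proposition \ref{prop:compres} to realize a compact generator of $\Der_{\qcc}(\cA_X)$ as $\iota_g(\mathcal{G})$, where $\mathcal{G}$ is a compact generator of $\Der_{\qcoh}(\cO_X)$; on a smooth complex algebraic variety such a $\mathcal{G}$ is perfect and in particular lies in $\Der^b_{\coh}(\cO_X)$. Since $\iota_g$ is exact and sends coherent $\cO_X$-modules to coherent $\cA_X$-modules, $\iota_g(\mathcal{G}) \in \Der^b_{\coh}(\cA_X)$; moreover $\hbar$ acts by zero on any object in the essential image of $\iota_g$, so $\cA_X^{loc} \otimes_{\cA_X} \iota_g(\mathcal{G}) = 0$. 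The full subcategory of $\Der_{\qcc}(\cA_X)$ formed by objects satisfying both conditions is thick: both ``bounded coherent'' and ``annihilated by $\cA_X^{loc} \otimes_{\cA_X} (-)$'' are preserved by distinguished triangles and by direct summands, the latter because $\cA_X^{loc} \otimes_{\cA_X} (-)$ is triangulated. Since this thick subcategory contains the generator $\iota_g(\mathcal{G})$, it contains the entire thick envelope $\langle \iota_g(\mathcal{G}) \rangle$, which by Theorem \ref{Rav_Nee} coincides with the full subcategory of compact objects.

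For the backward direction, given $\mathcal{M} \in \Der^b_{\coh}(\cA_X)$ with $\cA_X^{loc} \otimes_{\cA_X} \mathcal{M} = 0$, I would first check that $\mathcal{M} \in \Der_{\qcc}(\cA_X)$: the cohomology of $\gr_\hbar \mathcal{M}$ is coherent and hence quasi-coherent, while cohomological completeness can be deduced from the uniform $\hbar$-nilpotency established next. Writing $\cA_X^{loc} \otimes_{\cA_X} \mathcal{M}$ as the homotopy colimit of $\mathcal{M} \xrightarrow{\hbar} \mathcal{M} \xrightarrow{\hbar} \cdots$ and using coherence together with boundedness of $\mathcal{M}$, its vanishing forces a uniform integer $n$ with $\hbar^n \cdot H^i(\mathcal{M}) = 0$ for every $i$. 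Iterated truncation triangles $\tau^{<k} \mathcal{M} \to \mathcal{M} \to H^k(\mathcal{M})[-k] \xrightarrow{+1}$ then reduce the problem to the case of a single coherent $\cA_X$-module $N$ satisfying $\hbar^n N = 0$. The $\hbar$-adic filtration $0 \subset \hbar^{n-1} N \subset \cdots \subset \hbar N \subset N$ has successive quotients that are coherent $\cO_X$-modules, which on the smooth variety $X$ are perfect and hence compact in $\Der_{\qcoh}(\cO_X)$. Since the right adjoint $\gr_\hbar[-1]$ of $\iota_g$ commutes with coproducts, $\iota_g$ preserves compact objects, so each $\iota_g$-image of a subquotient is compact in $\Der_{\qcc}(\cA_X)$, and by repeated extension so is $\mathcal{M}$.

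The hard part will be the backward direction's passage from the derived vanishing $\cA_X^{loc} \otimes_{\cA_X} \mathcal{M} = 0$ to a uniform nilpotency of $\hbar$ on the cohomology sheaves that is sharp enough to produce a genuine finite $\hbar$-adic filtration with coherent $\cO_X$-module subquotients. Once this reduction is in place, both implications follow from the same formal package: Theorem \ref{Rav_Nee}, the thick-envelope description of compact objects, and the compatibility between compactness on the commutative and quantized sides supplied by Proposition \ref{prop:compres}.
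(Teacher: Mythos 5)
The paper does not prove Theorem \ref{qccompact}; it quotes it from \cite{dgaff} without argument, so there is no in-text proof to compare against. Taken on its own terms, your proof is essentially correct, and the overall strategy --- use Theorem \ref{Rav_Nee} in both directions, push the two conditions through the thick envelope of the compact generator for the forward implication, and reduce to $\iota_g$ of coherent $\cO_X$-modules via $\hbar$-nilpotency and truncation for the converse --- is the natural one and is almost certainly how the cited reference proceeds.

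A few small points worth tightening. In the forward direction you implicitly need $\Der^b_{\coh}(\cA_X)\subset\Der_{\qcc}(\cA_X)$ for your ``full subcategory satisfying both conditions'' to be a thick subcategory of $\Der_{\qcc}(\cA_X)$; this holds because bounded coherent $\cA_X$-modules are cohomologically complete, a fact from \cite{KS3} that the paper itself invokes, and should be cited rather than left tacit. In the backward direction, the uniform nilpotency of $\hbar$ on $H^i(\cM)$ uses coherence plus quasi-compactness of $X$ in the Zariski topology; that is fine here but deserves a word. Your claim that $\gr_\hbar[-1]$ commutes with coproducts in $\Der_{\qcc}(\cA_X)$ is true but nontrivial, since coproducts there are the cohomological completions of the naive ones, and one needs Proposition \ref{cciso} to see that $\gr_\hbar$ is unaffected by that completion; alternatively you can avoid the issue entirely by observing that $\iota_g$ carries the thick envelope $\langle\mathcal{G}\rangle=\Der_{\qcoh}(\cO_X)^c$ into $\langle\iota_g\mathcal{G}\rangle=\Der_{\qcc}(\cA_X)^c$, which gives preservation of compacts with no coproduct computation. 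Finally, the parenthetical that cohomological completeness of $\cM$ ``can be deduced from the uniform $\hbar$-nilpotency'' is correct but left unproved; it is cleaner to just cite that bounded coherent DQ-modules are cohomologically complete.
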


\section{Fourier-Mukai functors in the quantized setting} 

The aim of this section is to study integral transforms in the framework of DQ-modules. In the first subsection, we review some results, from \cite{KS3}, concerning the convolution of DQ-kernels. In the second one, we adapt to qcc modules the framework for integral transforms develloped in \cite{KS3}. We prove that an integral transform preserving the compact objects of the qcc has a coherent kernel. In the last subsection, we focus our attention on integral transforms of coherent DQ-modules on projective smooth varieties. We first extend some classical adjunction results and finally prove that a coherent DQ-kernel induced an equivalence between the derived categories of DQ-modules with coherent cohomology if and only if the graded commutative kernel associated to it induces an equivalence between the derived categories of coherent sheaves. 

\vspace{0.3cm}

All along this section we use the following notations.
\begin{Notation}\label{not:www1}
\begin{enumerate}[(i)]
\item If $X$ is a smooth complex variety endowed with a DQ-algebroid $\cA_X$, we denote by $X^a$ the same variety endowed with the opposite DQ-algebroid $\cA_X^{op}$ and we write $\cA_{X^a}$ for this algebroid.

\item
Consider a product of smooth complex varieties $X_1\times X_2 \times X_3$, we write it $X_{123}$. We denote by
$p_i$ the $i$-th projection and by $p_{ij}$ the $(i,j)$-th projection
({\em e.g.,} $p_{13}$ is the projection from 
$X_1\times X_1^a\times X_2$ to $X_1\times X_2$). 
%We use similar notation for a product of four varieties. 
\item
We write $\cA_i$ and $\cA_{ij^a}$
instead of $\cA_{X_i}$ and $\cA_{X_i\times X_j^a}$  and
similarly with other products. %We use the same notations 
%for $\cC_i$. 
%\item
%When there is no risk of confusion, we do note write the symbols 
%$p_{i}^{-1}$ and similarly with $i$ replaced with $ij$, etc.\ 

%\item We write $\Lte$ for the tensor product over $\C^\hbar$.

\item We set $\Du^{\prime}_{\cA_X}=\fRHom_{\cA_X}(\cdot,\cA_X): \Der(\cA_X)^{\opp} \to \Der(\cA_{X^a})$.
\end{enumerate}
\end{Notation}%

\subsection{Convolution of DQ-kernel}
We review some results, from \cite{KS3}, concerning the convolution of DQ-kernels.
\subsubsection{Tensor product and convolution of DQ-kernels} 
The tensor product of DQ-modules is given by

\begin{defi}[{\cite[Def. 3.1.3]{KS3}}]\label{def:DQtens}
Let $\cK_i \in \Der(\cA_{i(i+1)^a})$ $(i=1, 2)$. We set
\begin{equation*}
\cK_1 \uLte_{\cA_2} \cK_2 = p_{12}^{-1} \cK_1 \Lte_{p_{12}^{-1}\cA_{12^a}} \cA_{123} \Lte_{p_{23^a}^{-1} \cA_{23}} p_{23}^{-1} \cK_2 \in \Der(p_{13}^{-1}\cA_{13^a}).
\end{equation*}
\end{defi}

The composition of kernels is given by

\begin{defi}\label{def:DQconv}
Let $\cK_i \in \Der(\cA_{i(i+1)^a})$ $(i=1, 2)$. We set
\begin{align*}
\cK_1 \underset{2}{\ast} \cK_2 =& \dR p_{13 \ast} (\cK_1 \uLte_{\cA_2} \cK_2) \in \Der(\cA_{13^a})\\
\cK_1 \underset{2}{\circ} \cK_2 =& \dR p_{13!} (\cK_1 \uLte_{\cA_2} \cK_2) \in \Der(\cA_{13^a}).
\end{align*}
\end{defi}

\subsubsection{Finiteness and duality for DQ-modules}

The following result is a special case of Theorem 3.2.1 of \cite{KS3}. 
%
%For a closed subset $\Lambda$ of $X$, we denote by $\Der_{\coh, \Lambda}^{b}(\cA_X)$ the full triangulated subscategory of $\Der_{\coh}^{b}(\cA_X)$ consiting of objects supported by $\Lambda$. 
%
\begin{thm} \label{coherence}
Let $X_i$ $(i=1, \; 2, \; 3)$ be a smooth complex variety. For $i=1 \, ,2$, consider the product $X_i \times X_{i+1}$ and $\cK_i \in \Der_{\coh}^b(\cA_{i (i+1)^a})$. Assume that $X_2$ is proper. Then the object $\cK_1 \underset{2}{\circ} \cK_2$ belongs to $\Der^b_{\coh}(\cA_{13^a})$.
\end{thm}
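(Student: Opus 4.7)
The plan is to split the convolution into two operations — an internal derived tensor product over the middle algebroid, followed by the direct image $\dR p_{13!}$ — and show that each step preserves bounded coherence under the given hypotheses. The crucial input of the properness of $X_2$ is that the projection $p_{13}\colon X_1\times X_2\times X_3 \to X_1\times X_3$ is proper, so the natural morphism $\dR p_{13!} \to \dR p_{13*}$ is an isomorphism on the relevant complexes, and we are reduced to proving coherence of a proper pushforward.

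For the tensor product step, I would observe that $\cK_1 \uLte_{\cA_2} \cK_2$ is built by pulling back the coherent kernels $\cK_i$ along the smooth projections $p_{12}$ and $p_{23}$ and then tensoring over the diagonal copy of $\cA_2$. Since each $X_i$ is smooth, the DQ-algebroid $\cA_i$ has finite tor-dimension locally (being an $\hbar$-deformation of $\cO_{X_i}$ on a smooth variety); together with the smoothness of the projections, this guarantees that the derived tensor product of two bounded coherent complexes remains both bounded and coherent, so that $\cK_1 \uLte_{\cA_2} \cK_2$ lies in the bounded coherent derived category over the sheaf of algebras on the triple product.

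For the direct image step, the strategy is to reduce to the classical Grauert direct image theorem for coherent $\cO$-modules via the graded functor $\gr_\hbar$. One verifies that $\gr_\hbar$ commutes with proper direct image and with convolution over the middle factor, yielding
\begin{equation*}
\gr_\hbar(\cK_1 \underset{2}{\circ} \cK_2) \simeq \gr_\hbar \cK_1 \underset{2}{\circ} \gr_\hbar \cK_2,
\end{equation*}
and the right-hand side lies in $\Der^b_{\coh}(\cO_{13})$ by the classical theorem, as $X_2$ is proper. The lift back to coherence of $\cK_1 \underset{2}{\circ} \cK_2$ over $\cA_{13^a}$ then uses that the object on the left is cohomologically complete (a direct image of a $\cA$-coherent object, which is automatically cohomologically complete), combined with Proposition \ref{ccgr} and the standard $\hbar$-adic argument: a cohomologically complete DQ-module whose graded piece is bounded coherent is itself bounded coherent.

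The main obstacle I anticipate is the careful handling of the non-commutative algebroids on the triple product, in particular identifying $\cA_{123}$ as a suitable sheaf of rings and verifying that $\gr_\hbar$ interchanges with the derived tensor product over $\cA_2$ so that the compatibility displayed above is rigorous. Once these technical compatibilities are in hand and the finite tor-dimension of the DQ-algebroids on smooth varieties is exploited, the argument reduces cleanly to the classical Grauert coherence theorem combined with the cohomological completeness toolkit reviewed in Section \ref{adjs}.
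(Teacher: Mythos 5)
The paper does not actually prove this theorem: it is cited verbatim as a special case of \cite[Thm.~3.2.1]{KS3}, so there is no ``paper's proof'' to match against in the usual sense. Your proposal is a plausible reconstruction of the strategy used in that reference: split $\cK_1 \underset{2}{\circ} \cK_2 = \dR p_{13!}(\cK_1 \uLte_{\cA_2} \cK_2)$, use properness of $X_2$ to identify $\dR p_{13!}$ with $\dR p_{13*}$, reduce coherence to the commutative composition via the compatibility $\gr_\hbar(\cK_1\underset{2}{\circ}\cK_2)\simeq \gr_\hbar\cK_1 \underset{2}{\circ} \gr_\hbar\cK_2$ and the classical direct image theorem, and lift back using the cohomological-completeness toolkit (coherent $\cA$-modules are cohomologically complete, $\dR p_{13*}$ preserves cohomological completeness by \cite[Prop.~1.5.12]{KS3}, and \cite[Thm.~1.6.4]{KS3} upgrades a cohomologically complete object with bounded coherent $\gr_\hbar$ to a bounded coherent $\cA$-module). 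That is the intended argument, and it is sound at the level of outline.

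There is, however, a real imprecision in the intermediate step that you will need to repair to make the argument rigorous. You assert that $\cK_1 \uLte_{\cA_2} \cK_2$ ``lies in the bounded coherent derived category over the sheaf of algebras on the triple product,'' and then rely on this to conclude that the composition is a direct image of an ``$\cA$-coherent'' object, hence cohomologically complete. But by Definition~\ref{def:DQtens} the object $\cK_1 \uLte_{\cA_2} \cK_2$ lives in $\Der(p_{13}^{-1}\cA_{13^a})$, and $p_{13}^{-1}\cA_{13^a}$ is a pulled-back sheaf of rings on $X_{123}$ over which ``coherent'' is not a useful notion and over which no coherence theorem applies. What finite tor-dimension (coming from smoothness of $X_2$) buys you directly is only \emph{boundedness} of the tensor product, not coherence. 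Consequently, the cohomological completeness of $\cK_1 \uLte_{\cA_2} \cK_2$ --- which you need in order to push forward and invoke \cite[Thm.~1.6.4]{KS3} --- does not follow by the route you indicate (``direct image of a coherent object''), and must be established by a separate argument, e.g.\ by realizing $\cK_1 \uLte_{\cA_2} \cK_2$ as the pullback along the diagonal embedding $X_{123}\hookrightarrow X_1\times X_2\times X_2\times X_3$ of the genuinely coherent external product $\cK_1 \ubtimes \cK_2$, or by checking cohomological completeness directly on the level of the $\gr_\hbar$-filtration. This is precisely the technical content of \cite[Thm.~3.2.1]{KS3} that your sketch elides; the rest of the reduction is correct.
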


Let $X_i$ $(i=1, \, 2, \, 3)$ be a smooth projective complex variety endowed with the Zariski topology and let $\cA_i$ be a DQ-algebroid on $X_i$. 
We recall some duality results for DQ-modules from \cite[Chap. 3]{KS3}. First, we need the following result.
\begin{prop} [{\cite[p. 93]{KS3}}]
Let $\cK_i \in \Der^b(\cA_{i(i+1)^a})$ $(i=1 \, ,2)$ and let $\mathcal{L}$ be a bi-invertible $\cA_2 \otimes \cA_{2^a}$-module. Then, there is a natural isomorphism
\begin{equation*}
(\cK_1 \underset{2}{\circ} \mathcal{L})\underset{2}{\circ} \cK_2 \simeq \cK_1 \underset{2}{\circ} (\mathcal{L}\underset{2}{\circ} \cK_2).
\end{equation*}
\end{prop}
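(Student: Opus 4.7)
The plan is to exhibit both sides of the claimed isomorphism as the derived proper direct image of one and the same triple tensor product on the fourfold product $X_1 \times X_2 \times X_2 \times X_3$, combining associativity of the derived tensor product with the compatibility of $\dR(\cdot)_!$ with composition of morphisms.

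First, I would work on $Y := X_1 \times X_2 \times X_2 \times X_3$, equipped with its natural DQ-algebroid, and iterate Definition \ref{def:DQtens} to form the triple outer tensor product
\[
\mathcal{T} \;:=\; \cK_1 \uLte_{\cA_2} \cL \uLte_{\cA_2} \cK_2
\]
in $\Der(\cA_Y)$, where the two copies of $\uLte_{\cA_2}$ refer respectively to the left and right copies of $X_2$ inside $Y$. Associativity of the derived tensor product over the algebras appearing in Definition \ref{def:DQtens} then supplies canonical isomorphisms
\[
(\cK_1 \uLte_{\cA_2} \cL) \uLte_{\cA_2} \cK_2 \;\simeq\; \mathcal{T} \;\simeq\; \cK_1 \uLte_{\cA_2} (\cL \uLte_{\cA_2} \cK_2).
\]

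Second, let $p \colon Y \to X_1 \times X_3$ be the projection onto the outer factors, and factor $p$ through $X_1 \times X_2 \times X_3$ in two ways: first contracting the left copy of $X_2$ (so the inner pushforward computes $\cK_1 \underset{2}{\circ} \cL$), or first contracting the right copy of $X_2$ (so the inner pushforward computes $\cL \underset{2}{\circ} \cK_2$). In either case, Definition \ref{def:DQconv} together with the compatibility of derived proper direct image with composition of morphisms identifies the iterated convolution with $\dR p_!(\mathcal{T})$. Combining with the associativity isomorphism of the previous step yields the desired natural isomorphism.

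The main obstacle will be the careful bookkeeping of the various DQ-algebroid structures: Definition \ref{def:DQtens} secretly inserts an auxiliary $\cA_{123}$-factor in the middle of the tensor product, so to iterate the construction on $Y$ one must first describe the intermediate bimodule structures in a uniform way before invoking associativity of $\Lte$. Bi-invertibility of $\cL$ plays no essential role in the associativity argument itself; it merely guarantees that $\cL$ is a genuine module, ensuring the convolutions with $\cL$ remain in $\Der^b$ so that the statement makes sense as written.
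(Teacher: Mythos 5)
The paper gives no proof of this proposition; it is quoted directly from \cite[p.~93]{KS3}, so there is no in-text argument to compare against.

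Your plan --- pass to the fourfold product $X_1 \times X_2 \times X_2 \times X_3$, invoke associativity of $\uLte$, and collapse by $\dR p_!$ --- is the standard route to associativity of convolution, and would in principle yield a far more general statement than this proposition. Two issues deserve attention. First, the ``main obstacle'' you defer is precisely where the DQ-specific content lies: on the fourfold product the tensor product is not a naive $\Lte$ of pullbacks but requires inserting the DQ-algebroid on $X_1\times X_2\times X_2\times X_3$ as a bimodule over the pullbacks of $\cA_{12^a}$, $\cA_{22^a}$ and $\cA_{23^a}$, and matching these structures so that associativity can be invoked is exactly what \cite[\S 3.1]{KS3} works out; calling it bookkeeping understates its role. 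Second, and more substantively, your remark that bi-invertibility of $\cL$ ``plays no essential role'' misreads the hypothesis. A bi-invertible $\cA_2\otimes\cA_{2^a}$-module, regarded as an $\cA_{22^a}$-kernel, is coherent and supported on the diagonal $\Delta_{X_2}\subset X_2\times X_2$; hence in $\cK_1\underset{2}{\circ}\cL$ the object $\cK_1\uLte_{\cA_2}\cL$ is concentrated on $X_1\times\Delta_{X_2}$, and the projection $p_{13}$ is an isomorphism on that support, so convolution with $\cL$ is just a twist with no genuine pushforward (and similarly for $\cL\underset{2}{\circ}\cK_2$). This is what makes the isomorphism available for arbitrary $\cK_i\in\Der^b(\cA_{i(i+1)^a})$ --- note that no coherence is assumed on the $\cK_i$ --- via the projection formula and associativity of $\Lte$ over $\cA_2$ alone, with no base-change argument on a fourfold product. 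Your route would work if carried out, but it rebuilds heavier machinery than the hypothesis, which a direct proof is designed to exploit, actually requires.
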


%\begin{lemme}
%For $\cK_i \in \Der^b(\cA_{i(i+1)^a})$ $(i=1 \, ,2)$, we have a natural morphism in $\Der^b(\cA_{13})$:
%\begin{equation*}
%(\Du^{\prime}_{\cA_{12}}\cK_1) \underset{2^a}{\circ} \w_{2^a} \underset{2^a}{\circ} (\Du^{\prime}_{\cA_{23}}\cK_2) \to \Du^{\prime}_{\cA_{13}}(\cK_1 \underset{2}{\circ} \cK_2)
%\end{equation*}
%\end{lemme}

We denote by $\w_i$ the dualizing complexe for $\cA_i$. It is a bi-invertible $(\cA_i \otimes \cA_{i^a})$-module. Since the category of bi-invertible $(\cA_i \otimes \cA_{i^a})$-modules is equivalent to the category of coherent $\cA_{ii^a}$-modules simple along the diagonal, we will regard $\w_i$ as an $\cA_{ii^a}$-module supported by the diagonal and we will still denote it by $\w_i$.

\begin{thm}[{\cite[Theorem 3.3.3]{KS3}}] \label{dualdqserre}
Let $\cK_i \in \Der_{\coh}^b(\cA_{i (i+1)^a})$ $(i=1, \, 2)$. %Assume that $\Supp(\cK_1) \times_{X_2} \Supp(\cK_2)$ is proper over $X_1 \times X_{3^a}$. 
There is a natural isomorphism in $\Der^b_{\coh}(\cA_{1^a3})$
\begin{equation*}
(\Du^{\prime}_{\cA_{12^a}}\cK_1) \underset{2^a}{\circ} \w_{2^a} \underset{2^a}{\circ} (\Du^{\prime}_{\cA_{23^a}}\cK_2) \stackrel{\sim}{\to} \Du^{\prime}_{\cA_{13^a}}(\cK_1 \underset{2}{\circ} \cK_2).
\end{equation*}
\end{thm}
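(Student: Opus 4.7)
The plan is to mirror the classical proof of the Serre-duality identity for composition of kernels in algebraic geometry, transposed into the DQ-module framework. The key ingredients are an analogue of Grothendieck-Serre duality for proper direct images of coherent $\cA$-modules, together with a duality formula for the internal tensor product $\uLte_{\cA_2}$ that underlies the convolution.

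Concretely, I would set $\cL = \cK_1 \uLte_{\cA_2} \cK_2 \in \Der^b(p_{13}^{-1}\cA_{13^a})$, so that $\cK_1 \underset{2}{\circ} \cK_2 = \dR p_{13!} \cL$ by Definition \ref{def:DQconv}. Since $X_2$ is projective, the projection $p_{13}\colon X_{123} \to X_{13}$ is proper and hence $\dR p_{13!} \simeq \dR p_{13*}$; by Theorem \ref{coherence}, $\cL$ has coherent cohomology so the target of the duality is well-behaved. Applying $\Du^{\prime}_{\cA_{13^a}}$ and invoking the Grothendieck-duality adjunction for $p_{13}$ in the DQ-setting, one obtains
\begin{equation*}
\Du^{\prime}_{\cA_{13^a}}(\cK_1 \underset{2}{\circ} \cK_2) \simeq \dR p_{13*} \fRHom_{p_{13}^{-1}\cA_{13^a}}(\cL,\, p_{13}^! \cA_{13^a}),
\end{equation*}
where the relative dualizing complex $p_{13}^! \cA_{13^a}$ is identified (up to the appropriate shift) with $\w_{2^a}$ pulled back from $X_2$, viewed as a module on $X_{123}$ supported along the preimage of the diagonal.

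The second step is to apply a K\"unneth-style duality identity to the tensor product $\cK_1 \uLte_{\cA_2} \cK_2$, distributing $\fRHom(-,\, p_{13}^! \cA_{13^a})$ across the two factors. After simplification, this should yield
\begin{equation*}
\fRHom_{p_{13}^{-1}\cA_{13^a}}(\cL,\, p_{13}^! \cA_{13^a}) \simeq (\Du^{\prime}_{\cA_{12^a}} \cK_1) \uLte_{\cA_{2^a}} \w_{2^a} \uLte_{\cA_{2^a}} (\Du^{\prime}_{\cA_{23^a}} \cK_2),
\end{equation*}
after which applying $\dR p_{13*}$ and recognising the result as the $\circ_{2^a}$-convolution reassembles it into the desired right-hand side. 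Naturality of the isomorphism follows automatically from the naturality of each of the above steps.

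The main technical obstacle will be establishing the two black-box identities invoked above in the DQ-context: first, Grothendieck-Serre duality for the proper morphism $p_{13}$ together with the identification of $p_{13}^!\cA_{13^a}$ with a shift of the pull-back of $\w_{2^a}$; and second, the behaviour of $\Du^{\prime}$ on the internal tensor product $\uLte_{\cA_2}$. Both are essentially formal consequences of standard homological algebra combined with the bi-invertibility of $\w_{2^a}$, but the book-keeping is delicate because of the need to track which side of each algebroid acts on each tensor factor; this is exactly where the opposite-algebroid notation $(-)^a$ becomes indispensable, for instance in correctly turning the convolution $\circ_2$ into $\circ_{2^a}$ on the dualised side.
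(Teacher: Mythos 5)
The paper offers no proof of this statement: it is imported verbatim from \cite[Theorem 3.3.3]{KS3}, so there is nothing in the text to compare your argument against. As for your sketch, it points at the right heuristic picture but inverts the logical order of \cite{KS3} and is therefore circular as written. In the DQ-algebroid framework one does not have a general $f^!$-adjoint to $\dR f_!$, nor a relative dualizing complex $p_{13}^!\cA_{13^a}$, nor a genuine pullback $Lp^{*}$ along which $\w_{2^a}$ could be transported from $X_2$; there is only the topological inverse image $p^{-1}$, with external operations mediated through $\uLte$. So the two ``black boxes'' you invoke --- Grothendieck--Serre duality for the proper map $p_{13}$, and the K\"unneth-style identity distributing $\fRHom(-,p_{13}^!\cA_{13^a})$ across the tensor factors --- are precisely the content one is trying to establish. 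In \cite{KS3} the dualizing complex $\w_X$ is constructed up front as a bi-invertible $(\cA_X\otimes\cA_{X^a})$-module, biduality for $\Du^{\prime}_{\cA_X}$ on $\Der^b_{\coh}$ is proved directly, and the convolution-duality formula above is then obtained by reduction to suitable resolutions and explicit manipulation of $\uLte$ and $\fRHom$; Serre duality for DQ-modules is deduced as a \emph{corollary} of the present theorem, not the other way around. To rescue your outline you would have to independently develop a $p^!$-formalism for DQ-algebroids, which is a larger undertaking than the theorem itself.
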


\subsection{Integral transforms for qcc modules}

In this section, we adapt to qcc objects the framework of convolutions of kernels of \cite{KS3}. In view of Definitions \ref{def:DQtens} and \ref{def:DQconv}, it is easy, using the functor of cohomological completion (see Definition \ref{def:cohocomp}), to define a tensor product and a composition for cohomologically complete modules.

\begin{defi}
Let $\cK_i \in \Der_{\cc}(\cA_{i(i+1)^a})$ $(i=1, 2)$. We set
\begin{align*}
\cK_1 \cLte_{\cA_2} \cK_2 =& (\cK_1 \uLte_{\cA_2} \cK_2)^{\cc} \in \Der_{cc}(p_{13}^{-1}\cA_{13^a}),\\
\cK_1 \underset{2}{\overline{\ast}} \cK_2 =& \dR p_{13 \ast} (\cK_1 \cLte_{\cA_2} \cK_2) \in \Der_{cc}(\cA_{13^a}),\\
\cK_1 \underset{2}{\overline{\circ}} \cK_2 =& \dR p_{13!} (\cK_1 \cLte_{\cA_2} \cK_2) \in \Der_{cc}(\cA_{13^a}).
\end{align*}
\end{defi}

\begin{Rem}
If $\cK_1 \in \Der^b_{\coh}(\cA_{12^a})$ and $\cK_2 \in \Der^b_{\coh}(\cA_{23^a})$, then
\begin{align*}
\cK_1 \cLte_{\cA_2} \cK_2 & \simeq \cK_1 \uLte_{\cA_2} \cK_2,\\
\cK_1 \underset{2}{\overline{\ast}} \cK_2 & \simeq \cK_1 \underset{2}{\ast} \cK_2,\\
\cK_1 \underset{2}{\overline{\circ}} \cK_2 & \simeq \cK_1 \underset{2}{\circ} \cK_2.
\end{align*}

\end{Rem}

\begin{lemme}\label{lemme:inversioncc}
Let $\cK_i \in \Der_{\cc}(\cA_{i(i+1)^a})$ $(i=1, 2)$. 
\begin{align*}
\cK_1 \underset{2}{\overline{\ast}} \cK_2 \simeq & (\cK_1 \underset{2}{\ast} \cK_2)^{\cc},\\
\cK_1 \underset{2}{\overline{\circ}} \cK_2 \simeq & (\cK_1 \underset{2}{\circ} \cK_2)^{\cc}.
\end{align*}
\end{lemme}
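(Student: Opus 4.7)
Set $\cN := \cK_1 \uLte_{\cA_2} \cK_2$. By the definitions, $\cK_1 \underset{2}{\overline{\ast}} \cK_2 = \dR p_{13 \ast}(\cN^{\cc})$ and $(\cK_1 \underset{2}{\ast} \cK_2)^{\cc} = (\dR p_{13 \ast}\cN)^{\cc}$, with an analogous reduction for $\underset{2}{\overline{\circ}}$ using $\dR p_{13!}$. The plan is therefore to establish that each of $\dR p_{13 \ast}$ and $\dR p_{13!}$ commutes with the functor $(\cdot)^{\cc}$ of cohomological completion.

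For the $\underset{2}{\overline{\ast}}$ case, I would apply $\dR p_{13 \ast}$ to the defining triangle
\[
\cN \to \cN^{\cc} \to \fRHom_{\cA_{123}}(\cA_{123}^{loc}, \cN)[1] \xrightarrow{+1}
\]
(obtained from $\cA_{123} \to \cA_{123}^{loc} \to \cA_{123}^{loc}/\cA_{123}$ and the definition of $(\cdot)^{\cc}$) and compare it, via the natural transformation $cc$, with the analogous triangle for $(\dR p_{13 \ast}\cN)^{\cc}$ on $X_{13}$. The crux is the identification
\[
\dR p_{13 \ast}\fRHom_{\cA_{123}}(\cA_{123}^{loc}, \cN) \simeq \fRHom_{\cA_{13^a}}(\cA_{13^a}^{loc}, \dR p_{13 \ast}\cN),
\]
which should follow from $\cA_{123}^{loc} \simeq \cA_{123} \Lte_{p_{13}^{-1}\cA_{13^a}} p_{13}^{-1}\cA_{13^a}^{loc}$ (localization at $\hbar$ being a central operation), the tensor-hom adjunction, and the formula $\dR p_{13 \ast}\fRHom(p_{13}^{-1}(-),-) \simeq \fRHom(-, \dR p_{13 \ast}(-))$ coming from $(p_{13}^{-1}, \dR p_{13 \ast})$. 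A compatible morphism between the two triangles followed by the five-lemma then concludes.

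For $\underset{2}{\overline{\circ}}$, the same scheme applies provided one obtains
\[
\dR p_{13!}\fRHom_{\cA_{123}}(\cA_{123}^{loc}, \cN) \simeq \fRHom_{\cA_{13^a}}(\cA_{13^a}^{loc}, \dR p_{13!}\cN).
\]
This is where I expect the main obstacle, since $\dR p_{13!}$ is not right adjoint to $p_{13}^{-1}$. My plan is to use the triangle $\cA \to \cA^{loc} \to \cA^{loc}/\cA$ to reduce to the analogous identity with $\cA^{loc}/\cA$, then to write $\cA^{loc}/\cA \simeq \colim_n \cA/\hbar^n \cA$ as a filtered colimit and exploit the two-term Koszul resolution $\cA \xrightarrow{\hbar^n} \cA$ of $\cA/\hbar^n \cA$ to handle the finite levels via the straightforward projection formula for $\dR p_{13!}$. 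The final step, interchanging $\dR p_{13!}$ with the derived inverse limit $\dR\lim_n$, is where the finite cohomological dimension of $\dR p_{13!}$ established in the appendix should be decisive.
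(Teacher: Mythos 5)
Your plan is genuinely different from the paper's. The paper never unwinds $(\cdot)^{\cc}$ into its defining triangle; instead it uses the natural transformation $cc:\id\to(\cdot)^{\cc}$ to build a single comparison map $(\cK_1 \underset{2}{\ast} \cK_2)^{\cc}\to\cK_1 \underset{2}{\overline{\ast}} \cK_2$ (the outer completion on the target disappears because $\dR p_{13\ast}$ sends cohomologically complete objects to cohomologically complete objects, KS3 Prop.~1.5.12), then applies $\gr_\hbar$, which kills the $cc$ by Proposition~\ref{cciso} and commutes with direct image, and finally concludes by the conservativity of $\gr_\hbar$ on cohomologically complete objects (Proposition~\ref{isogr}). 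That conservativity is the workhorse, and it entirely sidesteps any explicit manipulation of $\fRHom(\cA^{loc},\cdot)$.

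Your argument for $\overline{\ast}$ is nonetheless correct. Since $\hbar$-localization commutes with $p_{13}^{-1}$, the identity
\begin{equation*}
\dR p_{13 \ast}\,\fRHom_{p_{13}^{-1}\cA_{13^a}}\bigl(p_{13}^{-1}\cA_{13^a}^{loc},\cN\bigr)\;\simeq\;\fRHom_{\cA_{13^a}}\bigl(\cA_{13^a}^{loc},\dR p_{13\ast}\cN\bigr)
\end{equation*}
is just the $(p_{13}^{-1},\dR p_{13\ast})$ sheaf-Hom adjunction; it identifies the third vertices of your two triangles, and since the first vertices also agree you in fact get the middle isomorphism directly, no five-lemma needed. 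One minor slip: $\cN\in\Der(p_{13}^{-1}\cA_{13^a})$, not $\Der(\cA_{123})$, so the $\fRHom$ defining $\cN^{\cc}$ must be taken over $p_{13}^{-1}\cA_{13^a}$ (equivalently over $\C^\hbar_{X_{123}}$), not over $\cA_{123}$.

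The $\overline{\circ}$ case, however, has a genuine gap. Your plan requires moving $\dR p_{13!}$ across $\dR\varprojlim_n$ of the tower $\fRHom(\C^\hbar/\hbar^n,\cN)\simeq[\cN\xrightarrow{\hbar^n}\cN]$. But $\dR p_{13!}$ commutes with coproducts, not with products, while $\dR\varprojlim_n$ sits in a triangle built from countable products, $\dR\varprojlim_n X_n\to\prod_n X_n\to\prod_n X_n$. Finite cohomological dimension (of $\dR p_{13!}$, or of $\fRHom(\cA^{loc},\cdot)$, which is what the appendix actually bounds) controls boundedness, but boundedness is not the obstruction here --- the obstruction is that $\dR p_{13!}$ does not preserve infinite products, so there is not even a natural map in the direction you need without further input. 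To repair this you would need either the $\dR f_!$-analogue of KS3 Prop.~1.5.12 (that $\dR p_{13!}$ preserves cohomological completeness, which is precisely what the paper's ``proved similarly'' leans on to get the comparison map), or else the paper's $\gr_\hbar$-conservativity argument, which avoids the limit interchange altogether.
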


\begin{proof}
Using morphism (\ref{morcc}), we get a map
\begin{equation*}
\cK_1 \uLte_{\cA_2} \cK_2 \to \cK_1 \cLte_{\cA_2} \cK_2.
\end{equation*}
It induces a morphism
\begin{equation*}
(\dR p_{13 \ast} (\cK_1 \uLte_{\cA_2} \cK_2))^{cc} \to (\dR p_{13 \ast} (\cK_1 \cLte_{\cA_2} \cK_2))^{cc}.
\end{equation*}
By Proposition 1.5.12 of \cite{KS3} the direct image of a cohomologically complete module is cohomologically complete. Then,
\begin{equation*}
(\dR p_{13 \ast} (\cK_1 \cLte_{\cA_2} \cK_2))^{cc} \simeq \dR p_{13 \ast} (\cK_1 \cLte_{\cA_2} \cK_2).
\end{equation*}
This gives us a map
\begin{equation}\label{mor:comcc}
 (\cK_1 \underset{2}{\ast} \cK_2)^{cc} \to  \cK_1 \underset{2}{\overline{\ast}} \cK_2.
\end{equation}

Using the fact that the functor $\gr_\hbar$ commutes with direct image
and Proposition \ref{cciso}, we get the following commutative diagram.

\begin{equation*}
\xymatrix{\gr_\hbar ((\cK_1 \underset{2}{\ast} \cK_2)^{cc}) \ar[r]& \gr_\hbar( \cK_1 \underset{2}{\overline{\ast}} \cK_2) \\
\gr_\hbar (\cK_1 \underset{2}{\ast} \cK_2) \ar[u]^-{\gr_\hbar(cc)}_-{\wr} \ar[r] & \gr_\hbar( \cK_1 \underset{2}{\overline{\ast}} \cK_2) \ar@{=}[u]\\
\dR p_{13 \ast} \gr_\hbar (\cK_1 \uLte_{\cA_2} \cK_2) \ar[u]_-{\wr} \ar[r]^-{\sim}_-{\gr_\hbar(cc)} & (\dR p_{13 \ast} \gr_\hbar (\cK_1 \cLte_{\cA_2} \cK_2)) \ar[u]^-{\wr}. \\
}
\end{equation*}

%(\dR p_{13 \ast} \gr_\hbar (\cK_1 \uLte_{\cA_2} \cK_2)) \ar@{=}[u] \ar[r]^-{\sim} & (\dR p_{13 \ast} \gr_\hbar (\cK_1 \uLte_{\cA_2} \cK_2)) \ar[u]^-{\wr}_-{\gr_\hbar(cc)

It follows that the morphism $\gr_\hbar ((\cK_1 \underset{2}{\ast} \cK_2)^{cc}) \to \gr_\hbar( \cK_1 \underset{2}{\overline{\ast}} \cK_2)$ is an isomorphism. Applying Proposition \ref{isogr}, we obtain that the morphism (\ref{mor:comcc}) is an isomorphism.

The second formula is proved similarly.
\end{proof}

From now on all the varieties considered are smooth complex algebraic varieties endowed with the Zariski topology

\begin{cor}
Let $\cK_i \in \Der_{\qcc}(\cA_{i(i+1)^a})$ $(i=1, 2)$. The kernel $
\cK_1 \underset{2}{\overline{\ast}} \cK_2$ is an object of $\Der_{\qcc}(\cA_{13^a})$.
\end{cor}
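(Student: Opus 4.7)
The plan is to verify the two defining conditions of $\Der_{\qcc}(\cA_{13^a})$ separately: cohomological completeness, and that the graded module lies in $\Der_{\qcoh}(\cO_{X_{13}})$.

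For cohomological completeness, the statement is essentially already packaged into the definition: by construction, $\cK_1 \cLte_{\cA_2} \cK_2 = (\cK_1 \uLte_{\cA_2} \cK_2)^{cc}$ lies in $\Der_{cc}$, and the argument recalled in the proof of Lemma \ref{lemme:inversioncc} (via Proposition 1.5.12 of \cite{KS3}) shows that $\dR p_{13\ast}$ preserves cohomologically complete modules. Hence $\cK_1 \underset{2}{\overline{\ast}} \cK_2 = \dR p_{13\ast}(\cK_1 \cLte_{\cA_2} \cK_2) \in \Der_{cc}(\cA_{13^a})$.

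For the quasi-coherence of the graded, I would chain together the natural identifications assembled in the commutative diagram of the preceding proof. By Lemma \ref{lemme:inversioncc}, $\cK_1 \underset{2}{\overline{\ast}} \cK_2 \simeq (\cK_1 \underset{2}{\ast} \cK_2)^{cc}$, and Proposition \ref{cciso} then yields
\begin{equation*}
\gr_\hbar(\cK_1 \underset{2}{\overline{\ast}} \cK_2) \simeq \gr_\hbar(\cK_1 \underset{2}{\ast} \cK_2) \simeq \dR p_{13\ast}\bl \gr_\hbar(\cK_1 \uLte_{\cA_2} \cK_2) \br,
\end{equation*}
using the compatibility of $\gr_\hbar$ with $\dR p_{13\ast}$ (recorded in the same diagram). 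Next, $\gr_\hbar$ commutes with the external tensor product over $\cA_2$, so $\gr_\hbar(\cK_1 \uLte_{\cA_2} \cK_2) \simeq \gr_\hbar \cK_1 \uLte_{\cO_2} \gr_\hbar \cK_2$, and since each $\gr_\hbar \cK_i$ lies in $\Der_{\qcoh}(\cO_{X_{i(i+1)}})$ by hypothesis, this external tensor product belongs to $\Der_{\qcoh}(\cO_{X_{123}})$.

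Finally, since the projection $p_{13}:X_{123}\to X_{13}$ is a quasi-compact quasi-separated morphism of noetherian schemes, $\dR p_{13\ast}$ preserves $\Der_{\qcoh}$, giving $\gr_\hbar(\cK_1 \underset{2}{\overline{\ast}} \cK_2) \in \Der_{\qcoh}(\cO_{X_{13}})$. Combined with cohomological completeness, this gives the desired membership in $\Der_{\qcc}(\cA_{13^a})$. The only slightly delicate point — and the main place where one should be careful — is the compatibility of $\gr_\hbar$ with $\uLte_{\cA_2}$; everything else reduces to invoking results from Section \ref{adjs} and the lemma just established.
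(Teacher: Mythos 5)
Your proof is correct and follows essentially the same route as the paper: the paper's one-line argument simply cites Lemma \ref{lemme:inversioncc} together with \cite[Prop.~3.1.4]{KS3} (the commutation of $\gr_\hbar$ with the convolution of DQ-kernels), and your chain of reductions — cohomological completeness from the definition and \cite[Prop.~1.5.12]{KS3}, then Lemma \ref{lemme:inversioncc} plus Proposition \ref{cciso} to identify $\gr_\hbar(\cK_1 \underset{2}{\overline{\ast}} \cK_2)$ with $\gr_\hbar \cK_1 \underset{2}{\ast} \gr_\hbar \cK_2$, then preservation of quasi-coherence under $\uLte$ and $\dR p_{13\ast}$ — is precisely an unpacking of that citation.
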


\begin{proof}
This follows from Lemma \ref{lemme:inversioncc} and \cite[Prop 3.1.4]{KS3} which says that the functor $\gr_\hbar$ commutes with the compostion of DQ-kernel (see Definition \ref{def:DQconv}).
\end{proof}

Let $\cK \in \Der_{\qcc}(\cA_{12^a})$. The above corollary implies that the functor (\ref{MukaiDQqcc}) is well-defined.

\begin{equation} \label{MukaiDQqcc}
\scalebox{0.96}{$\Phi_\cK: \Der_{\qcc}(\cA_2) \to \Der_{\qcc}(\cA_1), \quad  \cM \mapsto \cK \underset{2}{\overline{\ast}} \cM = Rp_{1\ast}(\cK \cLte_{p_2^{-1}\cA_2} p_2^{-1} \cM).$} 
\end{equation}

Before proving Theorem \ref{thm:kercoh}, we need to establish the following result.

\begin{prop}\label{prop:boundingcomplex}
Let $\cM \in \Der_{cc}(\cA_X)$. If $\gr_\hbar \cM \in \Der^b(\cO_X)$ then $\cM \in \Der^b_{cc}(\cA_X)$.
\end{prop}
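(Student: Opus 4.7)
The plan is to combine the distinguished triangle
\begin{equation*}
\cM\xrightarrow{\hbar}\cM\to\gr_\hbar\cM\xrightarrow{+1}
\end{equation*}
with cohomological completeness in order to force $H^i(\cM)$ to vanish outside a bounded range of indices. Pick integers $a\leq b$ such that $H^i(\gr_\hbar\cM)=0$ for $i\notin[a,b]$; taking the long exact cohomology sequence of the triangle above, one immediately reads off that $\hbar\colon H^i(\cM)\to H^i(\cM)$ is an isomorphism as soon as $i\geq b+2$ or $i\leq a-2$. In particular, in these ranges the cohomology sheaves of $\cM$ carry a canonical $\cA_X^{loc}$-module structure.

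Set $\cN:=\tau^{\geq b+2}\cM$. Derivedly tensoring the short exact sequence $0\to\cA_X\to\cA_X^{loc}\to\cA_X^{loc}/\cA_X\to 0$ with $\cN$, and using this very sequence as a two-term flat resolution to compute $\mathrm{Tor}^{\cA_X}_{\bullet}(\cA_X^{loc}/\cA_X,-)$, one checks that both $\mathrm{Tor}^{\cA_X}_0$ and $\mathrm{Tor}^{\cA_X}_1$ of $\cA_X^{loc}/\cA_X$ against any $\cA_X^{loc}$-module vanish. The hypercohomology spectral sequence then yields $\cA_X^{loc}/\cA_X\Lte_{\cA_X}\cN\simeq 0$, whence $\cA_X^{loc}\Lte_{\cA_X}\cN\simeq\cN$. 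Combined with the idempotency $\cA_X^{loc}\Lte_{\cA_X}\cA_X^{loc}\simeq\cA_X^{loc}$ and the standard tensor-hom adjunction, this gives the identification $\fRHom_{\cA_X}(\cA_X^{loc},\cN)\simeq\cN$.

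I then apply $\fRHom_{\cA_X}(\cA_X^{loc},\cdot)$ to the truncation triangle $\tau^{\leq b+1}\cM\to\cM\to\cN\xrightarrow{+1}$. Cohomological completeness of $\cM$ kills the middle term, so
\begin{equation*}
\cN\simeq\fRHom_{\cA_X}(\cA_X^{loc},\cN)\simeq\fRHom_{\cA_X}(\cA_X^{loc},\tau^{\leq b+1}\cM)[1].
\end{equation*}
Since $\cA_X^{loc}$ admits a two-term flat resolution over $\cA_X$ (via the mapping telescope of multiplication by $\hbar$), the functor $\fRHom_{\cA_X}(\cA_X^{loc},\cdot)$ has cohomological amplitude at most $1$; hence the right-hand side is concentrated in degrees $\leq b+1$ while the left-hand side lives in degrees $\geq b+2$. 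Both must therefore vanish, giving $H^i(\cM)=0$ for all $i\geq b+2$. A completely symmetric argument, applied at the other extremity with $\tau^{\leq a-2}\cM$ in place of $\tau^{\geq b+2}\cM$, shows that $H^i(\cM)=0$ for $i\leq a-2$, which establishes $\cM\in\Der^b_{cc}(\cA_X)$.

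The main obstacle I anticipate is the verification that $\cA_X^{loc}\Lte_{\cA_X}\cN\simeq\cN$ for a complex $\cN$ whose cohomology sheaves are $\cA_X^{loc}$-modules: the underlying Tor vanishing is elementary, but passing from cohomology-wise Tor vanishing to vanishing of the whole derived tensor product requires a clean hypercohomology spectral sequence argument in the non-commutative sheaf setting, as does the companion finite cohomological amplitude statement for $\fRHom_{\cA_X}(\cA_X^{loc},\cdot)$.
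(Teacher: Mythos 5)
Your proof follows essentially the same route as the paper's: identify the degrees in which $\hbar$ acts invertibly on cohomology, observe that the corresponding truncation is therefore a complex of $\cA_X^{loc}$-objects and hence is fixed by $\fRHom_{\cA_X}(\cA_X^{loc},\cdot)$, and use cohomological completeness of $\cM$ together with the truncation triangle to identify that truncation with (a shift of) $\fRHom_{\cA_X}(\cA_X^{loc},\cdot)$ of the complementary truncation, which one then bounds. Your handling of the lower bound by the symmetric truncation argument is a pleasant alternative to the paper's citation of \cite[Prop.~1.5.8]{KS3}; note that for that half you only need the left-exactness of $\fHom_{\cA_X}(\cA_X^{loc},\cdot)$, which gives $\fRHom_{\cA_X}(\cA_X^{loc},\Der^{\geq n})\subset\Der^{\geq n}$ without any amplitude estimate.

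The real gap is in the upper bound, and it sits exactly where you anticipated it. You claim that $\fRHom_{\cA_X}(\cA_X^{loc},\cdot)$ has cohomological amplitude at most $1$ because $\cA_X^{loc}$ admits a two-term flat resolution via the mapping telescope. This inference is not valid: a flat resolution controls $\Lte$, not $\fRHom$. The telescope $\bigoplus_n\cA_X\to\bigoplus_n\cA_X$ consists of infinite direct sums of $\cA_X$, which are flat but are not projective (nor K-projective) objects in $\Mod(\cA_X)$ -- sheaf categories have essentially no projectives -- so applying $\fHom(-,\cM)$ to it does not compute $\fRHom_{\cA_X}(\cA_X^{loc},\cM)$. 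The paper closes this gap in the appendix: using \cite[Lemma~1.5.11]{KS3} one writes $\fRHom_{\cA_X}(\cA_X^{loc},\cM)\simeq\dR\pi\bl(``\varprojlim_n"\,\hbar^n\cA_X)\Lte_{\cA_X}\cM\br$ in the pro-object formalism, then invokes the vanishing of $\dR^i\pi$ for $i>1$ on $\mathbb{N}$-indexed pro-objects together with a way-out argument; this is precisely the content of Corollary~\ref{cor:wayout}, which the paper's proof uses in place of your amplitude claim. So the conclusion you need is true, but the justification you give for it does not work, and the pro-object argument (or an equivalent substitute) is genuinely required.
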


\begin{proof}
Let $\cM \in \Der_{cc}(\cA_X)$ such that $\gr_\hbar \cM \in \Der^b(\cO_X)$. It follows immediately from \cite[Prop. 1.5.8]{KS3}, that $\cM \in \Der^{+}(\cA_X)$. Then to establish that  $\cM \in \Der^{b}(\cA_X)$, it is sufficient to prove that there exists a number $q$ such that $\tau^{\geq q} \cM \in \Der^{b}(\cA_X)$. For that purpose, we essentially follow the proof of Proposition 1.5.8 of \cite{KS3}. Since $\gr_\hbar \cM \in \Der^b_{\coh}(\cO_X)$, there exists $p \in \Z$ such that for every $i \geq p$, $\Hn^i(\gr_\hbar \cM)=0$. We deduce from the exact sequence $\Hn^i(\gr_\hbar \cM) \to \Hn^{i+1}(\cM) \stackrel{\hbar}{\to} \Hn^{i+1}(\cM) \to \Hn^{i+1}(\gr_\hbar \cM)$ that $\Hn^i(\cM) \stackrel{\hbar}{\to} \Hn^i(\cM)$ is an isomorphism for $i>p$. Thus, $\tau^{\geq p+1} \cM \in \Der(\cA_X^{loc})$ which means that $\fRHom_{\cA_X}(\cA_X^{loc},\tau^{\geq p+1} \cM)\simeq \tau^{\geq p+1} \cM$. Applying $\fRHom_{\cA_X}(\cA_X^{loc}, \cdot)$ to the distinguished triangle 
\begin{equation*}
\scalebox{0.9}{$\tau^{\leq p} \cM \to \cM \to \tau^{\geq p+1} \cM \stackrel{+1}{\to}$},
\end{equation*}
we get the distinguished triangle
\begin{equation*}
\scalebox{0.9}{$\fRHom_{\cA_X}(\cA_X^{loc},\tau^{\leq p} \cM) \to \fRHom_{\cA_X}(\cA_X^{loc}, \cM) \to \fRHom_{\cA_X}(\cA_X^{loc},\tau^{\geq p+1} \cM) \stackrel{+1}{\to}.$}
\end{equation*}
The module $\cM$ is cohomologically complete. Hence, we have the isomorphism $\fRHom_{\cA_X}(\cA_X^{loc},\cM)\simeq 0$. It follows that 
\begin{equation*}
\tau^{\geq p+1}\cM \simeq \fRHom_{\cA_X}(\cA_X^{loc},\tau^{\leq p} \cM)[1].
\end{equation*}
Corollary \ref{cor:wayout} implies that $\fRHom_{\cA_X}(\cA_X^{loc},\tau^{\leq p} \cM)[1] \in \Der^{-}(\cA_X)$. Then $\tau^{\geq p+1}\cM \in \Der^{+}(\cA_X) \cap \Der^{-}(\cA_X)=\Der^{b}(\cA_X)$. Thus, $\cM \in \Der^{b}(\cA_X)$.
\end{proof}

We now restrict our attention to the case of smooth proper algebraic varieties. The next result is inspired by \cite[Thm. 8.15]{mordg}. Recall that the objects of $\Der^b_{\coh}(\cA_X)$ are not necessarily compact in $\Der_{\qcc}(\cA_X)$ (see Theorem \ref{qccompact}).

\begin{thm}\label{thm:kercoh}
Let $X_1$ (resp. $X_2$) be a smooth complex algebraic variety endowed with a DQ-algebroid $\cA_1$ (resp. $\cA_2$). Let $\cK \in \Der_{\qcc}(\cA_{12^a})$. Assume that the functor $\Phi_\cK: \Der_{\qcc}(\cA_2) \to \Der_{\qcc}(\cA_1)$ preserves compact objects. Then, $\cK$ belongs to $\Der^{b}_{\coh}(\cA_{12^a})$.
\end{thm}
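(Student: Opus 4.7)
The goal is to show $\cK \in \Der^b_{\coh}(\cA_{12^a})$. Since $\cK$ is in particular cohomologically complete, Proposition \ref{prop:boundingcomplex} reduces the boundedness of $\cK$ to that of $\gr_\hbar \cK$, and a standard coherence criterion for cohomologically complete DQ-modules (if $\cM \in \Der_{cc}(\cA_X)$ is bounded with $\gr_\hbar \cM$ coherent, then $\cM$ is coherent, a consequence of results of \cite{KS3}) reduces the coherence of $\cK$ to that of $\gr_\hbar \cK$. So the whole problem becomes: prove $\gr_\hbar \cK \in \Der^b_{\coh}(\cO_{12^a})$.

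The next step is to transfer the compact-preservation hypothesis to the commutative integral transform $\Phi_{\gr_\hbar \cK} : \Der_{\qcoh}(\cO_2) \to \Der_{\qcoh}(\cO_1)$. Let $\cG$ be a compact generator of $\Der_{\qcoh}(\cO_2)$; by Proposition \ref{prop:compres}, $\iota_g \cG$ is a compact generator of $\Der_{\qcc}(\cA_2)$. By hypothesis, $\Phi_\cK(\iota_g \cG)$ is compact in $\Der_{\qcc}(\cA_1)$, so by Theorem \ref{qccompact} it lies in $\Der^b_{\coh}(\cA_1)$, and consequently $\gr_\hbar \Phi_\cK(\iota_g \cG) \in \Der^b_{\coh}(\cO_1)$. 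Using Lemma \ref{lemme:inversioncc}, the identification $\gr_\hbar \circ (\cdot)^{cc} \simeq \gr_\hbar$ of Proposition \ref{cciso}, the compatibility of $\gr_\hbar$ with ordinary convolution \cite[Prop. 3.1.4]{KS3}, and the Koszul-type identification $\gr_\hbar \iota_g \cG \simeq \cG \oplus \cG[1]$ obtained from the resolution $\cO_X \simeq [\cA_X \xrightarrow{\hbar} \cA_X]$, one computes
\begin{equation*}
\gr_\hbar \Phi_\cK(\iota_g \cG) \simeq \gr_\hbar \cK \underset{2}{\ast} (\cG \oplus \cG[1]) \simeq \Phi_{\gr_\hbar \cK}(\cG) \oplus \Phi_{\gr_\hbar \cK}(\cG)[1].
\end{equation*}
Hence $\Phi_{\gr_\hbar \cK}(\cG) \in \Der^b_{\coh}(\cO_1)$. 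The full subcategory of compact objects of $\Der_{\qcoh}(\cO_2)$ whose image under $\Phi_{\gr_\hbar \cK}$ is compact is thick (stable under triangles and direct summands) and contains $\cG$, so by Theorem \ref{Rav_Nee} it equals the entire category $\Der^b_{\coh}(\cO_2)$ of compact objects. Thus $\Phi_{\gr_\hbar \cK}$ preserves compact objects.

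The last step is to invoke the commutative analog of the theorem: a kernel $L \in \Der_{\qcoh}(\cO_{X_1 \times X_2})$ on the product of two smooth algebraic varieties whose associated integral transform $\Der_{\qcoh}(\cO_{X_2}) \to \Der_{\qcoh}(\cO_{X_1})$ preserves compact objects must lie in $\Der^b_{\coh}(\cO_{X_1 \times X_2})$. This is essentially \cite[Thm. 8.15]{mordg} (in the guise of Toën's derived Morita theory for schemes, making use of the fact that $\Der_{\qcoh}$ of a smooth algebraic variety is compactly generated by a single perfect complex and that compact $=$ bounded coherent in this setting). Applied to $L = \gr_\hbar \cK$, this yields the desired conclusion.

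The main obstacle is the bookkeeping in the second step: one must keep the commutations of $\gr_\hbar$ with pushforward, derived tensor product over $\cA_2$, and the cohomological completion $(\cdot)^{cc}$ correctly aligned, so as to identify $\gr_\hbar \Phi_\cK(\iota_g \cG)$ with $\Phi_{\gr_\hbar \cK}(\gr_\hbar \iota_g \cG)$. The ``doubling'' phenomenon $\gr_\hbar \iota_g \cG \simeq \cG \oplus \cG[1]$ is a genuine feature of the comparison, but it is harmless here since coherence, compactness, and membership in a thick envelope are all stable under the operation $\cdot \mapsto \cdot \oplus \cdot[1]$.
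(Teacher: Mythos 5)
Your proof is correct and follows essentially the same route as the paper: reduce to showing $\gr_\hbar \cK$ is bounded coherent (via Proposition \ref{prop:boundingcomplex} and the $\gr_\hbar$-coherence criterion of Kashiwara--Schapira), transfer compact-preservation to $\Phi_{\gr_\hbar\cK}$ using a compact generator $\iota_g\cG$ of $\Der_{\qcc}(\cA_2)$ and the compatibility $\gr_\hbar \Phi_\cK(\iota_g\cG)\simeq\Phi_{\gr_\hbar\cK}(\gr_\hbar\iota_g\cG)$, spread to all compact objects by a thick-subcategory argument, and invoke To\"en's Theorem 8.15. The one stylistic variation is that you make explicit the ``doubling'' $\gr_\hbar\iota_g\cG\simeq\cG\oplus\cG[1]$ so as to work directly with $\cG$, whereas the paper simply treats $\gr_\hbar\iota_g\cG$ as the generator and cites Corollary \ref{compcohdq}(ii); both are valid and amount to the same thing, since thickness is stable under passing to direct summands.
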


\begin{proof}
The kernel $\gr_\hbar\cK$ induces an integral transform 
\begin{equation*}
\Phi_{\gr_\hbar \cK } :\Der_{\qcoh}(\cO_2) \to \Der_{\qcoh}(\cO_1). 
\end{equation*}

Let $\mathcal{G}$ be a compact generator of $\Der_{\qcoh}(\cO_2)$. Then, by Proposition \ref{prop:compres} $\iota_g(\mathcal{G})$ is a compact generator of $\Der_{\qcc}(\cA_2)$. By hypothesis, $\Phi_\cK (\iota_g(\mathcal{G}))$ is a compact object of $\Der_{\qcc}(\cA_1)$. It follows that the object $\Phi_{\gr_\hbar \cK} (\gr_\hbar \iota_g(\mathcal{G}))$ belongs to $\Der_{\coh}^b(\cO_1)$ and thus is a compact object of $\Der_{\qcoh}(\cO_1)$.

Let $\mathcal{T}$ be the full subcategory of $\Der_{\coh}^b(\cO_2)$ such that $\Ob(\mathcal{T})=\lbrace \cM \in \Der^b_{\coh}(\cO_2) | \Phi_{\gr_\hbar \cK} (\cM) \in \Der_{\coh}^b(\cO_1)\rbrace$. The category $\mathcal{T}$ is a thick subcategory of $\Der_{\coh}^b(\cO_2)$ containing $\gr_\hbar \iota_g (\mathcal{G})$. By Corollary \ref{compcohdq}, $\Der_{\coh}^{b}(\cO_2)$ is the thick envelop of $\gr_\hbar \iota_g(\mathcal{G})$. Thus, $\mathcal{T}=\Der^b_{\coh}(\cO_2)$. It follows that the image of an object of $\Der_{\coh}^{b}(\cO_2)$ by $\Phi_{\gr_\hbar \cK }$ is an object of $\Der_{\coh}^b(\cO_1)$. Applying Theorem 8.15 of \cite{mordg}, we get that $\gr_\hbar \cK$ is an object of $\Der^b_{\coh}(\cO_{12})$. Applying Proposition \ref{prop:boundingcomplex}, we get that $\cK \in \Der^b(\cA_{12^a})$. Now, Theorem 1.6.4 of \cite{KS3} implies that $\cK \in \Der^{b}_{\coh}(\cA_{12^a})$.
\end{proof}

\subsection{Integral transforms of coherent DQ-modules}

In this section we study integral transforms of coherent DQ-modules. %For that purpose we need some duality results that we recall below.
%
%
%
%\subsubsection{Fourier-Mukai Transform}
%
%We define Fourier-Mukai functors in the framework of DQ-modules. 
%
Recall that all the varieties considered are smooth complex projective varieties endowed with the Zariski topology.

\vspace{0.3cm}

Let $\cK \in \Der^b_{\coh}(\cA_{12^a})$. Theorem \ref{coherence} implies that the functor (\ref{MukaiDQ}) is well-defined.

\begin{equation} \label{MukaiDQ}
\scalebox{0.96}{$\Phi_\cK: \Der^b_{\coh}(\cA_2) \to \Der^b_{\coh}(\cA_1),\; \cM \mapsto \cK \underset{2}{\circ} \cM = Rp_{1\ast}(\cK \Lte_{p_2^{-1}\cA_2} p_2^{-1} \cM).$} 
\end{equation}

\begin{prop}
Let $\cK_1 \in \Der^b_{\coh}(\cA_{12^a})$ and $\cK_2 \in \Der^b_{\coh}(\cA_{23^a})$. The composition
\begin{align*}
\Der^b_{\coh}(\cA_3)\stackrel{\Phi_{\cK_2}}{\rightarrow}\Der^b_{\coh}(\cA_2) \stackrel{\Phi_{\cK_1}}{\rightarrow}\Der^b_{\coh}(\cA_1)
\end{align*}
is isomorphic to $\Phi_{\cK_1 \underset{2}{\circ} \cK_2}:\Der^b_{\coh}(\cA_3) \to \Der^b_{\coh}(\cA_1)$.
\end{prop}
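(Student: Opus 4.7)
The plan is to prove the isomorphism by the standard ``composition of integral transforms equals convolution of kernels'' calculation, adapted to the DQ setting. Let me fix $\cM \in \Der^b_{\coh}(\cA_3)$ and unwind the left-hand side. Writing $p_i^{ij}$ and $p_{ij}^{123}$ for the projections from the various products, we have by definition
\begin{equation*}
\Phi_{\cK_1}\bl\Phi_{\cK_2}(\cM)\br \simeq \dR p_1^{12}{}_{\ast}\Bigl(\cK_1 \Lte_{(p_2^{12})^{-1}\cA_2}(p_2^{12})^{-1}\dR p_2^{23}{}_{\ast}\bl\cK_2 \Lte_{(p_3^{23})^{-1}\cA_3}(p_3^{23})^{-1}\cM\br\Bigr).
\end{equation*}

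The next step is to pull everything onto $X_{123}$. The square formed by $p_2^{12}, p_2^{23}, p_{12}^{123}, p_{23}^{123}$ is Cartesian and the maps are flat (since the $X_i$ are smooth), so flat base change gives $(p_2^{12})^{-1}\dR p_2^{23}{}_{\ast} \simeq \dR p_{12}^{123}{}_{\ast}(p_{23}^{123})^{-1}$; properness of $X_2$ (or rather properness of $p_2^{23}$ along the fibres of $p_3^{23}$) is not needed here since we are working with $\dR p_\ast$ in the algebraic setting with Zariski topology and flat base change for $\dR p_\ast$ of quasi-coherent complexes applies. I would then apply the projection formula to pull $\cK_1$ inside $\dR p_{12}^{123}{}_{\ast}$, combining with $\dR p_1^{12}{}_{\ast}\circ \dR p_{12}^{123}{}_{\ast} \simeq \dR p_1^{123}{}_{\ast}$, and rearrange the derived tensor products. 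After collecting terms the expression becomes
\begin{equation*}
\dR p_1^{123}{}_{\ast}\Bigl((p_{12}^{123})^{-1}\cK_1 \Lte_{\cA_{123}}(p_{23}^{123})^{-1}\cK_2 \Lte_{(p_3^{123})^{-1}\cA_3}(p_3^{123})^{-1}\cM\Bigr).
\end{equation*}

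Factoring $\dR p_1^{123}{}_{\ast} \simeq \dR p_1^{13}{}_{\ast}\circ \dR p_{13}^{123}{}_{\ast}$ and recognising, via Definitions \ref{def:DQtens} and \ref{def:DQconv} (together with the fact that $\dR p_{13}{}_{\ast} \simeq \dR p_{13!}$ since $X_2$ is proper, as used in Theorem \ref{coherence}), that the inner $\dR p_{13}^{123}{}_{\ast}$ computes precisely $\cK_1 \underset{2}{\circ}\cK_2 \uLte_{(p_3^{13})^{-1}\cA_3}(p_3^{13})^{-1}\cM$, we arrive at $\Phi_{\cK_1\underset{2}{\circ}\cK_2}(\cM)$.

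The main obstacle is purely bookkeeping: one must verify carefully that projection formula, base change, and associativity of the derived tensor product hold over the non-commutative sheaves of rings $(p_{ij}^{123})^{-1}\cA_{ij^{a}}$ and $\cA_{123}$, and that these identifications are compatible with the relevant $\cA$-module structures. Fortunately, these compatibilities for DQ-kernels are precisely what is established in \cite[\S 3.1]{KS3} (in particular the associativity isomorphism for $\underset{2}{\circ}$), so the argument ultimately reduces to invoking these tools. The construction of the isomorphism is functorial in $\cM$, hence gives the required isomorphism of functors.
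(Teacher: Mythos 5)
Your argument is correct, and it is essentially the paper's argument with the citation unwound: the paper disposes of this in one line by noting it is a direct consequence of Proposition 3.2.4 of \cite{KS3}, which is precisely the associativity isomorphism $(\cK_1 \underset{2}{\circ} \cK_2) \underset{3}{\circ} \cK_3 \simeq \cK_1 \underset{2}{\circ} (\cK_2 \underset{3}{\circ} \cK_3)$ applied with $\cK_3 = \cM$ regarded as a kernel from $X_3$ to a point. You instead rederive that associativity by hand via flat base change along the Cartesian square and the projection formula, and then at the end you yourself observe that those compatibilities are exactly what is established in \cite[\S 3.1]{KS3} ``in particular the associativity isomorphism for $\circ$.'' So the two routes collapse to the same citation; yours just shows the reader what is inside the black box. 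One small point worth being a little more careful about: you remark that properness of $X_2$ is not needed for the base change step, which is fine, but properness of $X_3$ \emph{is} implicitly used to know $\Phi_{\cK_2}$ lands in $\Der^b_{\coh}(\cA_2)$ in the first place (via Theorem \ref{coherence}), and you should state that the base change and projection formula you invoke are the versions proved in \cite{KS3} for (pre)sheaves over the non-commutative algebroids $\cA_{ij^a}$, not the usual quasi-coherent statements, since the tensor products $\uLte_{\cA_2}$ are taken against the bimodule $\cA_{123}$ rather than being ordinary $\cO$-module tensor products.
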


\begin{proof}
It is a direct consequence of Proposition 3.2.4 of \cite{KS3}.
\end{proof}

We extend to DQ-modules some classical adjunctions results. They are usually established using Grothendieck duality which does not seem possible to do here. Our proof relies on Theorem \ref{dualdqserre}.

\begin{defi}
For any object $\cK \in \Der^b_{\coh}(\cA_{12^a})$, we set
\begin{align*}
\cK_R= \Du^{\prime}_{\cA_{12^a}}(\cK) \underset{2^a}{\circ} \w_{2^a} && \cK_L= \w_{1^a} \underset{1^a}{\circ} \Du^{\prime}_{\cA_{12^a}}(\cK)  
\end{align*}
objects of $\Der_{\coh}^b(\cA_{1^a2})$.
\end{defi}

\begin{prop}\label{adjoint_mukai}
Let $\Phi_\cK:\Der^b_{\coh}(\cA_2) \to \Der^b_{\coh}(\cA_1)$ be the Fourier-Mukai functor associated to $\cK$ and $\Phi_{\cK_R}:\Der^b_{\coh}(\cA_1) \to \Der^b_{\coh}(\cA_2)$ (resp. $\Phi_{\cK_L}:\Der^b_{\coh}(\cA_1) \to \Der^b_{\coh}(\cA_2)$) the Fourier-Mukai functor associated to $\cK_R$ (resp. $\cK_L$). Then $\Phi_{\cK_R}$ (resp. $\Phi_{\cK_L}$) is right (resp. left) adjoint to $\Phi_\cK$.
\end{prop}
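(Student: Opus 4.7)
The plan is to establish the right adjunction $\RHom_{\cA_1}(\Phi_\cK \cN, \cM) \simeq \RHom_{\cA_2}(\cN, \Phi_{\cK_R} \cM)$ by a chain of natural isomorphisms; the left adjunction is handled by a symmetric argument. Two main ingredients are needed. First, on a smooth projective variety $Y$ equipped with a DQ-algebroid $\cA_Y$, for $\cF \in \Der^b_{\coh}(\cA_Y)$ one has the Serre-type identity
\begin{equation*}
\RHom_{\cA_Y}(\cF, \cG) \simeq \Rg(Y; \Du^{\prime}_{\cA_Y}(\cF) \Lte_{\cA_Y} \cG),
\end{equation*}
which follows from the isomorphism $\fRHom_{\cA_Y}(\cF, \cG) \simeq \Du^{\prime}_{\cA_Y}(\cF) \Lte_{\cA_Y} \cG$ valid for perfect $\cF$, together with the perfectness of coherent DQ-modules on smooth varieties. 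Second, Theorem \ref{dualdqserre} applied with $X_3 = \pt$ (so that $\cN$ is viewed as a kernel on $X_2\times \pt^a$) yields the duality for convolution
\begin{equation*}
\Du^{\prime}_{\cA_1}(\cK \underset{2}{\circ} \cN) \simeq \cK_R \underset{2^a}{\circ} \Du^{\prime}_{\cA_2}(\cN).
\end{equation*}

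Starting from $\RHom_{\cA_1}(\Phi_\cK \cN, \cM)$, I would first apply the Serre-type identity to rewrite it as $\Rg(X_1; \Du^{\prime}_{\cA_1}(\Phi_\cK \cN) \Lte_{\cA_1} \cM)$, then substitute using the duality for convolution to obtain
\begin{equation*}
\Rg\bigl(X_1;\,(\cK_R \underset{2^a}{\circ} \Du^{\prime}(\cN)) \Lte_{\cA_1} \cM\bigr).
\end{equation*}
Unfolding $\cK_R \underset{2^a}{\circ} \Du^{\prime}(\cN)$ as a derived direct image along the projection to $X_{1^a}$ and applying the projection formula, one converts the outer $\Rg$ on $X_1$ into a $\Rg$ on $X_1 \times X_2^a$; associativity of derived tensor product and of derived push-forward then permits one to integrate along $X_{1^a}$ first, producing $\cK_R \underset{1}{\circ} \cM = \Phi_{\cK_R}(\cM)$ on $X_2$. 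The remaining expression is $\Rg(X_2; \Du^{\prime}_{\cA_2}(\cN) \Lte_{\cA_2} \Phi_{\cK_R}(\cM))$, which by the Serre-type identity in reverse equals $\RHom_{\cA_2}(\cN, \Phi_{\cK_R} \cM)$, establishing the adjunction.

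The main technical obstacle will be the careful bookkeeping of the various left/right $\cA$-module and opposite-algebroid structures as one passes through $\cA_i,\cA_{i^a},\cA_{ij^a}$ in the projection-formula manipulation, as well as a clean justification of the Serre-type identity (in particular of the perfectness of coherent DQ-modules on smooth projective varieties, which is not spelled out in the excerpt). For the left adjoint $\Phi_{\cK_L}$, one simply runs the same computation with the roles of the two factors exchanged, i.e. dualising in the first slot of Theorem \ref{dualdqserre}; equivalently, one applies the right-adjoint case to the kernel $\cK_L$ and identifies $(\cK_L)_R \simeq \cK$ via biduality $\Du^{\prime}\circ\Du^{\prime} \simeq \id$ and the fact that $\Du^{\prime}(\w_{1^a})$ is inverse to $\w_{1^a}$ under convolution over $X_{1^a}$.
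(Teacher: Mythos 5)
Your proposal follows the same route as the paper: both reduce $\RHom$ to a global section of a derived tensor with the dual (your ``Serre-type identity'' is exactly the paper's opening step $\fRHom_{\cA_1}(\cK \circ_2 \cM, \cN) \simeq \fRHom_{\cA_1}(\cK \circ_2 \cM, \cA_1) \Lte_{\cA_1} \cN$), both invoke Theorem~\ref{dualdqserre} to pull the dualizing functor through the convolution, and both finish with the projection formula and associativity of push-forward to reassemble $\cK_R \underset{1}{\circ} \cdot$. Your remark that the left adjoint could alternatively be obtained by applying the right-adjoint case to $\cK_L$ and using biduality is a small refinement over the paper's ``the proof is similar,'' but the core argument is the same.
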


\begin{proof}
We have
\begin{equation*}
\RHom_{\cA_1}(\cK \underset{2}{\circ} \cM, \cN) \simeq \Rg(X_1,\fRHom_{\cA_1}(\cK \underset{2}{\circ} \cM, \cN)).
\end{equation*}

Applying Theorem \ref{dualdqserre} and the projection formula, we get

\begin{align*}
\fRHom_{\cA_1}(\cK \underset{2}{\circ} \cM, \cN) & \simeq \fRHom_{\cA_1}(\cK \underset{2}{\circ} \cM, \cA_1) \Lte_{\cA_1} \cN\\
&\simeq (\Du^{\prime}_{\cA_{12^a}}(\cK)\underset{2^a}{\circ} \w_{2^a} \underset{2^a}{\circ} \Du^{\prime}_{\cA_2}(\cM)) \Lte_{\cA_1} \cN\\
&\simeq (\cK_R \underset{2^a}{\circ} \Du^{\prime}_{\cA_2}(\cM)) \Lte_{\cA_1} \cN\\
&\simeq \dR p_{1 \ast} ( \cK_R \Lte_{p_2^{-1} \cA_{2^a}} p_2^{-1} \Du^{\prime}_{\cA_2}(\cM)) \Lte_{\cA_1} \cN\\
&\simeq \dR p_{1 \ast} ( \cK_R \Lte_{p_2^{-1} \cA_{2^a}} p_2^{-1} \Du^{\prime}_{\cA_2}(\cM) \Lte_{p_1^{-1}\cA_1} p_1^{-1} \cN).
\end{align*}

Taking the global section and applying again the projection formula, we get
\begin{align*}
\Rg(X_1,\fRHom_{\cA_1}(\cK \underset{2}{\circ} \cM, \cN)) \simeq &\\ & \hspace{-2.8cm}\Rg(X_1, \dR p_{1 \ast} ( \cK_R \Lte_{p_2^{-1} \cA_{2^a}} p_2^{-1} \Du^{\prime}_{\cA_2}(\cM)) \Lte_{p_1^{-1}\cA_1} p_1^{-1} \cN)\\
&\hspace{-2.8cm} \simeq \Rg(X_1 \times X_2,(\cK_R \Lte_{p_2^{-1} \cA_{2^a}} p_2^{-1} \Du^{\prime}_{\cA_2}(\cM)) \Lte_{p_1^{-1}\cA_1} p_1^{-1} \cN)\\
& \hspace{-2.8cm} \simeq \Rg(X_2, \dR p_{2 \ast} ( (\cK_R \Lte_{p_2^{-1} \cA_{2^a}} p_2^{-1} \Du^{\prime}_{\cA_2}(\cM)) \Lte_{p_1^{-1}\cA_1} p_1^{-1} \cN))\\
&\hspace{-2.8cm} \simeq \Rg(X_2,\Du^{\prime}_{\cA_2}(\cM) \Lte_{\cA_2} (\cK_R \underset{1}{\circ} \cN))\\
& \hspace{-2.8cm} \simeq \RHom_{\cA_2}(\cM,\cK_R \underset{1}{\circ} \cN).
\end{align*}
Thus, $\RHom_{\cA_1}(\cK \underset{2}{\circ} \cM, \cN)\simeq \RHom_{\cA_2}(\cM,\cK_R \underset{1}{\circ} \cN)$ which proves the claim. The proof is similar for $\cK_L$.
\end{proof}

Finally, we have the following theorem.

\begin{thm}\label{finalmukai}
Let $X_1$ (resp. $X_2$) be a smooth complex projective variety endowed with a DQ-algebroid $\cA_1$ (resp. $\cA_2$). Let $\cK \in \Der^b_{\coh}(\cA_{12^a})$. The following conditions are equivalent
\begin{enumerate}[(i)]
\item The functor $\Phi_\cK: \Der^b_{\coh}(\cA_2) \to \Der^b_{\coh}(\cA_1)$ is fully faithful  (resp. an equivalence of triangulated categories).

\item The functor $\Phi_{\gr_\hbar \cK}:\Der^b_{\coh}(\cO_2) \to \Der^b_{\coh}(\cO_1)$ is fully faithful (resp. an equivalence of triangulated categories).
\end{enumerate}
\end{thm}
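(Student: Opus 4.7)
My plan is to compare both sides through the adjunction $(\Phi_\cK,\Phi_{\cK_R})$ from Proposition \ref{adjoint_mukai}: $\Phi_\cK$ is fully faithful iff its unit $\eta\colon \id \to \Phi_{\cK_R}\circ\Phi_\cK$ is a natural isomorphism on $\Der^b_{\coh}(\cA_2)$, and the analogous statement holds on the commutative side. Before attacking either implication, I would first record two compatibilities needed to compare the DQ unit with the commutative one. The first is that $\gr_\hbar$ intertwines the convolution of kernels (Prop. 3.1.4 of \cite{KS3}), yielding $\gr_\hbar \circ \Phi_\cK \simeq \Phi_{\gr_\hbar \cK} \circ \gr_\hbar$. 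The second is that $\gr_\hbar \cK_R \simeq (\gr_\hbar \cK)_R$, which follows from the compatibility of $\gr_\hbar$ with $\Du^{\prime}$ and with the dualizing complex $\w$. Together they identify $\gr_\hbar \eta_\cM$ with the commutative unit $\eta'_{\gr_\hbar \cM}$ of the adjunction $(\Phi_{\gr_\hbar \cK},\Phi_{(\gr_\hbar \cK)_R})$.

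For $(ii) \Rightarrow (i)$ I would argue object by object via cohomological completeness. Let $\cM \in \Der^b_{\coh}(\cA_2)$. Theorem \ref{coherence} ensures that $\Phi_{\cK_R}\Phi_\cK(\cM)$ also lies in $\Der^b_{\coh}(\cA_2)$, so both source and target of $\eta_\cM$ are coherent, hence cohomologically complete. The hypothesis that $\Phi_{\gr_\hbar \cK}$ is fully faithful says $\eta'_{\gr_\hbar \cM}$ is an isomorphism, and hence so is $\gr_\hbar \eta_\cM$ by the compatibility above; Proposition \ref{isogr} then forces $\eta_\cM$ itself to be an isomorphism, so $\Phi_\cK$ is fully faithful.

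For $(i) \Rightarrow (ii)$ I would use the generator-and-thick-subcategory strategy singled out in the introduction. Fix a perfect compact generator $\cG \in \Der^b_{\coh}(\cO_2)$ of $\Der_{\qcoh}(\cO_2)$, which exists by \cite{BVdB}; then $\iota_g(\cG) \in \Der^b_{\coh}(\cA_2)$, so by $(i)$ the unit $\eta_{\iota_g(\cG)}$ is an isomorphism, and applying $\gr_\hbar$ yields that $\eta'_{\gr_\hbar \iota_g(\cG)}$ is one as well. By Proposition \ref{thickiso} the full subcategory of $\Der^b_{\coh}(\cO_2)$ on which $\eta'$ is an isomorphism is thick, and Corollary \ref{compcohdq}(ii) says that $\gr_\hbar \iota_g(\cG)$ classically generates $\Der^b_{\coh}(\cO_2)$; hence $\eta'$ is an isomorphism throughout and $\Phi_{\gr_\hbar \cK}$ is fully faithful. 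The equivalence case will then follow formally by applying the fully-faithful case to both $\cK$ and $\cK_R$ (using $\gr_\hbar \cK_R \simeq (\gr_\hbar \cK)_R$) and noting that $\Phi_\cK$ is an equivalence precisely when both $\Phi_\cK$ and its right adjoint are fully faithful.

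The main obstacle I foresee is the compatibility work in the first paragraph, especially the identification $\gr_\hbar \cK_R \simeq (\gr_\hbar \cK)_R$ and the naturality statement matching $\gr_\hbar \eta_\cM$ with $\eta'_{\gr_\hbar \cM}$: this amounts to checking that the DQ adjunction constructed via Theorem \ref{dualdqserre} is intertwined by $\gr_\hbar$ with the commutative Grothendieck-duality adjunction. Once that is in place, both implications reduce to a short application of Proposition \ref{isogr} and of the thick-envelope argument, respectively.
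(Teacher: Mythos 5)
Your proof is correct and follows essentially the same strategy as the paper: for $(ii)\Rightarrow(i)$ an object-by-object argument via $\gr_\hbar$, cohomological completeness of coherent modules, and Proposition \ref{isogr}; for $(i)\Rightarrow(ii)$ the thick-subcategory argument applied to the classical generator $\gr_\hbar\iota_g(\mathcal{G})$ from Corollary \ref{compcohdq}, using Proposition \ref{thickiso}. The only cosmetic difference is that you deduce the equivalence case by applying the fully-faithful case to both $\cK$ and $\cK_R$, whereas the paper repeats the unit argument directly on the counit; both are fine, and you are also a bit more explicit than the paper about the compatibility $\gr_\hbar\cK_R\simeq(\gr_\hbar\cK)_R$ and the matching of adjunction units.
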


\begin{proof}
We recall the following fact. Let $F$ and $G$ be two functors and assume that $F$ is right adjoint to $G$. Then, there are two natural morphisms
\begin{align}
G \circ F \to \id \label{eq:1f}\\
\id \to F \circ G \label{eq:2f}.
\end{align}
The morphism (\ref{eq:1f}) (resp. (\ref{eq:2f})) is an isomorphism if and only if $F$ (resp. G) is fully faithfull. The morphisms (\ref{eq:1f}) and (\ref{eq:2f}) are isomorphisms if and only if $F$ and $G$ are equivalences.

\begin{enumerate}
\item $(i) \Rightarrow (ii)$.
Proposition \ref{adjoint_mukai} is also true for $\cO$-modules since the proof works in the commutative case without any changes. Moreover, the functor $\gr_\hbar$ commutes with the composition of kernels. Hence, we have $\gr_\hbar (\cK_R) \simeq (\gr_\hbar \cK)_R$.
Therefore, the functor $\Phi_{\gr_\hbar \cK_R}$ is a right adjoint of the functor $\Phi_{\gr_\hbar \cK}$. Thus, there are morphisms of functors
\begin{align}
\Phi_{\gr_\hbar \cK} \circ \Phi_{\gr_\hbar \cK_R} \to \id, \label{isoeqi1}\\
\id \to \Phi_{\gr_\hbar \cK_R} \circ \Phi_{\gr_\hbar \cK}. \label{isoeqi2}
\end{align}
Set $\Phi_{\cL}=\Phi_{\gr_\hbar \cK_R} \circ \Phi_{\gr_\hbar \cK}$.
Let $\mathcal{T}_2$ be the full subcategory of $\Der^b_{\coh}(\cO_2)$ whose objects are the $\cM \in \Der^b_{\coh}(\cO_2)$ such that
\begin{equation*}
\cM \to \Phi_{\cL}(\cM)
\end{equation*}
is an isomorphism. It follows from Proposition \ref{thickiso} that $\mathcal{T}_2$ is a thick subcategory of $\Der_{\coh}^b(\cO_2)$.

Let $\mathcal{G}$ be a compact generator of $\Der_{\qcoh}(\cO_2)$. By Corollary \ref{compcohdq}, $\Der_{\coh}^b(\cO_2)=\langle \mathcal{G} \rangle$. Since $\Phi_{\cK}$ is a fully faithful we have the isomorphism
\begin{equation*}
\iota_g(\mathcal{G}) \stackrel{\sim}{\to} \Phi_{\cK_R} \circ \Phi_{\cK}(\iota_g(\mathcal{G})).
\end{equation*}
Applying the functor $\gr_\hbar$, we get that $\gr_\hbar \iota_g(\mathcal{G})$ belongs to $\mathcal{T}_2$ and by Corollary \ref{compcohdq}, $\gr_\hbar \iota_g(\mathcal{G})$ is a classical generator of 
$\Der^b_{\coh}(\cO_2)$. Hence, $\mathcal{T}_2=\Der_{\coh}^b(\cO_2)$.
Thus, the morphism (\ref{isoeqi2}) is an isomorphism of functors. A similar argument shows that if $\Phi_{\gr_\hbar \cK}$ is an equivalence the morphism (\ref{isoeqi1}) is also an isomorphism which proves the claim.

\item $(ii) \Rightarrow (i)$. 

Since $\Phi_\cK$ and $\Phi_{\cK_R}$ are adjoint functors we have natural morphisms of functors
\begin{align*}
\Phi_{\cK} \circ \Phi_{\cK_R} \to \id, \\
\id \to \Phi_{\cK_R} \circ \Phi_{\cK}.
\end{align*}

If $\cM \in \Der^b_{\coh}(\cA_2)$, then we have

\begin{equation} \label{buttraite}
\cM \to \Phi_{\cK_R} \circ \Phi_{\cK}(\cM).
\end{equation}

Applying the functor $\gr_\hbar$, we get

\begin{equation}\label{equicom}
\gr_\hbar \cM \to \Phi_{\gr_\hbar \cK_R} \circ \Phi_{\gr_\hbar \cK}(\gr_\hbar \cM).
\end{equation}

If $\Phi_{\gr_\hbar \cK}$ is fully faithful, then the morphism (\ref{equicom}) is an isomorphism. The objects $\Phi_{\cK_R} \circ \Phi_{\cK}(\cM)$ and $\cM$ are cohomologically complete since they belongs to $\Der^b_{\coh}(\cA_2)$.
Thus the morphism (\ref{buttraite}) is an isomorphism that is to say
\begin{equation*}
\id \stackrel{\sim}{\to} \Phi_{\cK_R} \circ \Phi_{\cK}.
\end{equation*}%
It follows that $\Phi_\cK$ is fully faithful.

Similarly, one shows that if $\Phi_{\gr_\hbar \cK}$ is an equivalence then in addition
\begin{equation*}
\Phi_{\cK} \circ \Phi_{\cK_R} \stackrel{\sim}{\to} \id.
\end{equation*}
It follows that $\Phi_{\cK}$ is an equivalence.
\end{enumerate}
%It is a direct consequence from Proposition \ref{stepmukaiun} and \ref{stepmukaideux}.
\end{proof}

\begin{Rem}
The implication $(ii) \Rightarrow (i)$ of Theorem \ref{finalmukai} and Proposition \ref{adjoint_mukai} still hold if one replaces smooth projective varieties by complex compact manifolds. This result implies immediatly that the quantization of the Poincaré bundle constructed in \cite{PantenBassa} induces an equivalence.
\end{Rem}

\section{Appendix}

In this appendix we show that the cohomological dimension of the functor $\fRHom_{\C^\hbar_X}(\C^{\hbar,loc}_X, \cM)$ is finite.
We refer to \cite{categories_and_sheaves} for a detailed account of pro-objects. Recall that to an abelian category $\mathcal{C}$ one associates the abelian category $\Pro(\mathcal{C})$ of its pro-objects. Then, there is a natural fully faithful functor $i_\mathcal{C}: \mathcal{C} \to \Pro(\mathcal{C})$. The functor $i_\mathcal{C}$ is exact. For any small filtrant category $I$ the functor $``\varprojlim":\Fct(I^{\opp}, \mathcal{C}) \to \Pro(\mathcal{C})$ is exact. If $\mathcal{C}$ admits small projective limits the functor $i_\mathcal{C}$ admits a right adjoint denoted $\pi$.

\begin{equation*}
\pi:\Pro(\mathcal{C}) \to \mathcal{C} , \; "\varprojlim_i" X_i \mapsto \varprojlim_i X_i.
\end{equation*}
 
If $\mathcal{C}$ is a Grothendieck category, then $\pi$ has a right derived functor $\dR \pi:\Der(\Pro(\mathcal{C})) \to \Der(\mathcal{C})$.

Let us recall Lemma 1.5.11 of \cite{KS3}.

\begin{lemme}\label{lemme:procomp}
Let $\cM \in \Der(\C^\hbar)$. Then , we have
\begin{equation}
\dR \pi (( `` \varprojlim_{n}" \C^\hbar_X \hbar^n) \Lte_{\C^\hbar_X} \cM) \simeq \fRHom_{\C^\hbar_X}(\C_X^{\hbar,loc}, \cM).
\end{equation}
\end{lemme}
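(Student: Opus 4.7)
The plan is to show that both sides of the claimed isomorphism compute the same homotopy projective limit of the tower
$$ \cdots \xrightarrow{\hbar} \cM \xrightarrow{\hbar} \cM \xrightarrow{\hbar} \cM, $$
packaged in two different ways (ind/Milnor-resolution on the right, pro-object/$\dR\pi$ on the left).

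For the right-hand side, I would write $\C_X^{\hbar,loc} = \colim_n \hbar^{-n}\C_X^\hbar$ as a filtered colimit indexed by $\N$ with transition maps the natural inclusions, and use the standard Milnor short exact sequence
$$ 0 \to \bigoplus_n \hbar^{-n}\C_X^\hbar \xrightarrow{1-\sigma} \bigoplus_n \hbar^{-n}\C_X^\hbar \to \C_X^{\hbar,loc} \to 0, $$
where $\sigma$ encodes the transitions. Since each $\hbar^{-n}\C_X^\hbar$ is free of rank one over $\C_X^\hbar$, applying $\fRHom_{\C_X^\hbar}(\,\cdot\,,\cM)$ produces a two-term complex $\prod_n \cM \to \prod_n \cM$ whose differential, under the natural identification $\fRHom_{\C_X^\hbar}(\hbar^{-n}\C_X^\hbar,\cM)\simeq \cM$, is $(m_n)\mapsto (m_n - \hbar m_{n+1})$. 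This is by definition a model for the homotopy limit of the tower $(\cM,\hbar)$.

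For the left-hand side, applying $(\,\cdot\,)\Lte_{\C_X^\hbar}\cM$ term-by-term to the pro-object $``\varprojlim_n"\,\hbar^n\C_X^\hbar$ gives the pro-object $``\varprojlim_n"\,(\hbar^n\cM)$, whose transition maps are the inclusions $\hbar^{n+1}\cM\hookrightarrow\hbar^n\cM$; after identifying $\hbar^n\cM$ with $\cM$ via multiplication by $\hbar^n$, the transitions become multiplication by $\hbar$. Since $\Mod(\C_X^\hbar)$ is Grothendieck, the derived functor $\dR\pi$ of the projective limit is well defined, and on an $\N^{\opp}$-indexed tower it is computed by exactly the same two-term product complex $\prod_n\cM \xrightarrow{1-\hbar}\prod_n\cM$. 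Matching the two descriptions yields the desired isomorphism; one can produce the isomorphism canonically by constructing the map from $(``\varprojlim_n"\C_X^\hbar \hbar^n)\Lte_{\C_X^\hbar}\cM$ to $\fRHom_{\C_X^\hbar}(\C_X^{\hbar,loc},\cM)$ using adjunction between $i_{\mathcal{C}}$ and $\pi$, then checking it is an isomorphism on a K-flat resolution of $\cM$.

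The step I expect to require the most care is the bookkeeping that identifies the two models for the homotopy inverse limit: one must verify that, after the identifications $\hbar^{-n}\C_X^\hbar \simeq \C_X^\hbar$ and $\hbar^n\C_X^\hbar\simeq\C_X^\hbar$, the transition maps coming from inclusions in the colimit presentation of $\C_X^{\hbar,loc}$ and in the pro-object $``\varprojlim_n"\,\hbar^n\C_X^\hbar$ both become multiplication by $\hbar$, and that the map constructed via the adjunction $(i_{\mathcal{C}},\pi)$ induces the identity on these towers after tensoring with $\cM$. Once this compatibility is in place, the isomorphism is formal.
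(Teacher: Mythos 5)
The paper does not actually prove this statement: it is quoted verbatim as Lemma 1.5.11 of \cite{KS3}, so there is no paper-internal argument to compare against. Your direct verification is sound and follows the natural line of attack (which is essentially what \cite{KS3} does): identify both sides with the homotopy inverse limit of the constant tower $\cdots\xrightarrow{\hbar}\cM\xrightarrow{\hbar}\cM$, the right-hand side by applying $\fRHom_{\C_X^\hbar}(\cdot,\cM)$ to the Milnor presentation of $\C_X^{\hbar,loc}=\colim_n\hbar^{-n}\C_X^\hbar$ and using that each $\hbar^{-n}\C_X^\hbar$ is free of rank one, the left-hand side by tensoring the pro-object termwise (each term is free, so the derived tensor is computed on the nose) and then computing $\dR\pi$ of an $\N^{\opp}$-tower by the usual two-term product complex, which is legitimate here because $\dR^i\pi$ vanishes on such towers for $i>1$ by \cite[Cor.\ 13.3.16]{categories_and_sheaves}, as the paper itself records in the appendix. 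One small imprecision worth fixing: $\hbar^n\C_X^\hbar\Lte_{\C_X^\hbar}\cM$ is canonically a copy of $\cM$ (rank-one free tensor), not the subobject $\hbar^n\cM\subset\cM$; these differ when $\cM$ has $\hbar$-torsion, and the ``identification via multiplication by $\hbar^n$'' you invoke next is then not an isomorphism. It is cleaner to trivialize $\hbar^n\C_X^\hbar\simeq\C_X^\hbar$ by $\hbar^n\mapsto 1$ from the outset, note that under these trivializations the inclusions $\hbar^{n+1}\C_X^\hbar\hookrightarrow\hbar^n\C_X^\hbar$ become multiplication by $\hbar$, and then tensor with $\cM$; the rest of your argument goes through unchanged.
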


\begin{prop}
The functor $\fRHom_{\C^\hbar_X}(\C^{\hbar,loc}_X , \cdot)$ has finite cohomological dimension.
\end{prop}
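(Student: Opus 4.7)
The plan is to exploit the factorization provided by Lemma \ref{lemme:procomp} and control the cohomological dimension of each factor separately. Specifically, the isomorphism
\begin{equation*}
\fRHom_{\C^\hbar_X}(\C^{\hbar,loc}_X, \cM) \simeq \dR \pi \bigl(( ``\varprojlim_{n}" \C^\hbar_X \hbar^n) \Lte_{\C^\hbar_X} \cM\bigr)
\end{equation*}
writes the functor of interest as a composition $\dR \pi \circ T$ where $T(\cdot) = ``\varprojlim_{n}" \C^\hbar_X \hbar^n \Lte_{\C^\hbar_X} (\cdot)$. It suffices to bound each factor's cohomological dimension and add.

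First I would show that $T$ has cohomological dimension $0$. Multiplication by $\hbar^n$ gives an isomorphism $\C^\hbar_X \simeq \C^\hbar_X \hbar^n$ of $\C^\hbar_X$-modules, so every term of the pro-system is a flat $\C^\hbar_X$-module. Therefore the derived tensor product may be computed level-wise without resolving $\cM$, and $T(\cM) \simeq ``\varprojlim_n"(\C^\hbar_X\hbar^n \otimes_{\C^\hbar_X} \cM)$ is concentrated in the same cohomological degrees as $\cM$ (using that $``\varprojlim"$ is exact on $\Fct(\N^{\opp},\mathcal{C}) \to \Pro(\mathcal{C})$, as recalled in the paper).

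Next I would bound the cohomological dimension of $\dR \pi$ when restricted to pro-systems indexed by $\N^{\opp}$. Via the exact functor $``\varprojlim" : \Fct(\N^{\opp},\mathcal{C}) \to \Pro(\mathcal{C})$, the composition $\pi \circ ``\varprojlim"$ is simply the ordinary inverse limit $\varprojlim_n$ on sequences. Its right derived functor $\dR \varprojlim_n$ on a Grothendieck category is the classical Roos complex and satisfies $R^i \varprojlim_n = 0$ for $i \geq 2$ (only $\varprojlim$ and $\varprojlim^1$ can be nonzero because $\N$ is countable and cofinite). Hence $\dR \pi$ has cohomological dimension at most $1$ when applied to pro-objects coming from a countable tower, which is our situation.

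Combining these two bounds yields that $\fRHom_{\C^\hbar_X}(\C^{\hbar,loc}_X, \cdot)$ has cohomological dimension at most $1$. The main obstacle is the second step: one must justify that $\dR\pi$ computed in the abstract pro-category $\Pro(\mathcal{C})$ agrees, on our specific tower, with the classical derived inverse limit, so that the Roos-Mittag-Leffler vanishing $R^i\varprojlim = 0$ for $i \geq 2$ can be applied. This essentially amounts to checking that an injective resolution of the tower in $\Pro(\mathcal{C})$ can be chosen compatibly with the sequential presentation, which follows from the fact that $``\varprojlim"$ is exact and sends termwise injective towers to injectives in $\Pro(\mathcal{C})$.
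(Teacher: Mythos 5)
Your proposal is correct and follows essentially the same route as the paper: both start from Lemma \ref{lemme:procomp}, then commute the derived tensor product past $``\varprojlim_n"$ (the paper cites \cite[Prop. 6.1.9]{categories_and_sheaves} adapted to pro-objects, while you observe directly that each $\C^\hbar_X\hbar^n$ is free of rank one hence flat), and finally bound $\dR^i\pi$ on sequential towers by $i\leq 1$ (the paper cites \cite[Cor. 13.3.16]{categories_and_sheaves}, you invoke the Roos/Mittag--Leffler vanishing, which is the same fact). The two write-ups differ only in whether the intermediate steps are quoted from Kashiwara--Schapira or re-derived from first principles.
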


\begin{proof}
Let $\cM \in \Mod(\C^\hbar_X)$. By Lemma \ref{lemme:procomp} we have

\begin{equation}
\fRHom_{\C^\hbar_X}(\C_X^{\hbar,loc}, \cM) \simeq \dR \pi (( `` \varprojlim_{n}" \C^\hbar_X \hbar^n) \Lte_{\C^\hbar_X} \cM).
\end{equation}

 Then by Proposition 6.1.9 of \cite{categories_and_sheaves} adapted to the case of pro-objects, we have

\begin{equation*}
\dR \pi (( `` \varprojlim_{n}" \C^\hbar_X \hbar^n) \Lte_{\C^\hbar_X} \cM)\simeq \dR \pi ( `` \varprojlim_{n}" (\C^\hbar_X \hbar^n \Lte_{\C^\hbar_X} \cM)).
\end{equation*} 

It follows from Corollary 13.3.16 from \cite{categories_and_sheaves} that 
\begin{equation*}
\forall i>1, \; \dR^i \pi ( `` \varprojlim_{n}" (\C^\hbar_X \hbar^n \Lte_{\C^\hbar_X} \cM)) \simeq 0 
\end{equation*} 
which proves the claim.
\end{proof}

\begin{prop}
The functor $\fRHom_{\C^\hbar_X}(\C_X^{\hbar,loc},\cdot):\Der(\C^\hbar_X) \to \Der(\C^\hbar_X)$ is such that $\fRHom_{\C^\hbar_X}(\C_X^{\hbar,loc},\Der^{-}(\C^\hbar_X))\subset \Der^{-}(\C^\hbar_X)$.
\end{prop}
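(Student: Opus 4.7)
The plan is to deduce the result from the preceding proposition, which shows that $F := \fRHom_{\C^\hbar_X}(\C_X^{\hbar,loc},\cdot)$ has finite cohomological dimension (in fact $\dR^i F = 0$ for $i>1$). It is a standard homological-algebra consequence of such a bound that $F$ then sends $\Der^-$ to $\Der^-$, raising the top cohomological degree by at most one. Let $\cM \in \Der^-(\C_X^\hbar)$ and fix $n_0 \in \Z$ with $\Hn^i(\cM) = 0$ for all $i > n_0$; I will check that $F(\cM)$ lies in $\Der^{\leq n_0+1}(\C_X^\hbar)$.

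First I would invoke Lemma \ref{lemme:procomp} to rewrite
\begin{equation*}
F(\cM) \simeq \dR\pi\bigl(``\varprojlim_n" \C^\hbar_X\hbar^n \Lte_{\C^\hbar_X} \cM\bigr).
\end{equation*}
Each $\C^\hbar_X\hbar^n$ is free of rank one over $\C^\hbar_X$ (via $a \mapsto a\hbar^n$), hence flat, so $\C^\hbar_X\hbar^n \Lte_{\C^\hbar_X} \cM \in \Der^{\leq n_0}(\C^\hbar_X)$ for every $n$. Since ``$\varprojlim$'' is exact on filtrant diagrams, as recalled at the start of the appendix, the associated pro-complex lies in $\Der^{\leq n_0}(\Pro(\Mod(\C^\hbar_X)))$.

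The remaining step is to show that $\dR\pi$ sends this subcategory into $\Der^{\leq n_0+1}(\C^\hbar_X)$. This follows from the pointwise bound $\dR^i \pi = 0$ for $i > 1$ used in the proof of the preceding proposition (itself a consequence of Corollary 13.3.16 of \cite{categories_and_sheaves}), combined with a hypercohomology spectral sequence or, equivalently, a $\pi$-acyclic resolution argument applied to the pro-complex. Putting the pieces together gives $F(\cM) \in \Der^{\leq n_0+1}(\C^\hbar_X) \subset \Der^-(\C^\hbar_X)$.

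The main obstacle is precisely this last step---promoting the pointwise dimension bound on $\dR\pi$ to a uniform bound on bounded-above pro-complexes---which is a routine but technical application of the homological-algebra machinery of \cite{categories_and_sheaves} and requires no new ingredient beyond what is already invoked in the proof of the previous proposition.
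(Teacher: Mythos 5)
Your proof is correct, and it rests on the same essential fact the paper uses: the preceding proposition establishes that $F=\fRHom_{\C^\hbar_X}(\C_X^{\hbar,loc},\cdot)$ has finite cohomological dimension. The paper's whole proof is then a one-line citation to Example~1 of Chapter~I, Section~7 of Hartshorne's \emph{Residues and Duality} --- the way-out functor lemma applied directly to $F$: a derived functor with $\dR^iF=0$ for $i$ large carries $\Der^-$ into $\Der^-$. Your argument instead unwinds $F$ again through Lemma~\ref{lemme:procomp}, tracks degrees through the flat tensor with $\C^\hbar_X\hbar^n$ and the exact pro-limit functor, and then applies a dimension-shift argument to $\dR\pi$. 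That route is valid, but the step you yourself flag as the main obstacle --- promoting the pointwise bound $\dR^i\pi=0$ for $i>1$ to a uniform bound on bounded-above pro-complexes --- is precisely the content of the way-out lemma. The paper economizes by invoking that lemma once at the level of $F$, where finite cohomological dimension is already packaged by the preceding proposition, rather than re-deriving it at the level of $\dR\pi$; so your proof is a somewhat longer but equivalent passage through the same machinery.
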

\begin{proof}
This follows immediately from Example 1 of \cite[Ch. I §7.]{resdu}. 
\end{proof}

\begin{cor}\label{cor:wayout}
Let $X$ be a smooth complex (algebraic or analytic) variety endowed with a DQ-algebroid $\cA_X$. The functor $\fRHom_{\cA_X}(\cA_X^{loc}, \cdot)$ is such that $\fRHom_{\cA_X}(\cA_X^{loc}, \Der^{-}(\cA_X))\subset  \Der^{-}(\cA_X)$.
\end{cor}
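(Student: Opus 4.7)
The plan is to reduce the claim to the preceding proposition for the sheaf of rings $\C^\hbar_X$ by means of a change-of-rings argument. The starting observation is the tautological identification $\cA_X^{loc} \simeq \C_X^{\hbar,loc} \otimes_{\C^\hbar_X} \cA_X$ built into the definition of the localization of $\cA_X$ with respect to $\hbar$; this presents $\cA_X^{loc}$ as the extension of scalars of the $\C^\hbar_X$-module $\C_X^{\hbar,loc}$ to $\cA_X$.

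Granting this identification, the tensor--Hom adjunction associated with the morphism $\C^\hbar_X \to \cA_X$ yields a natural isomorphism
\[
\fRHom_{\cA_X}(\cA_X^{loc},\cM) \;\simeq\; \fRHom_{\cA_X}\bigl(\C_X^{\hbar,loc} \Lte_{\C^\hbar_X} \cA_X,\; \cM\bigr) \;\simeq\; \fRHom_{\C^\hbar_X}(\C_X^{\hbar,loc},\cM),
\]
where $\cM$ is viewed on the right-hand side as an object of $\Der(\C^\hbar_X)$ through the algebroid structure. The preceding proposition then immediately gives that this object lies in $\Der^{-}(\C^\hbar_X)$ whenever $\cM \in \Der^{-}(\C^\hbar_X)$.

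To transfer the conclusion back to $\cA_X$-modules, I would invoke the fact that the forgetful functor $\Mod(\cA_X) \to \Mod(\C^\hbar_X)$ is exact and faithful, so the cohomology sheaves of a complex of $\cA_X$-modules coincide, as sheaves of abelian groups, with the cohomology sheaves of the underlying complex of $\C^\hbar_X$-modules. Therefore $\cM \in \Der^{-}(\cA_X)$ implies $\cM \in \Der^{-}(\C^\hbar_X)$, and conversely the vanishing of cohomology in sufficiently large positive degrees detected at the level of $\C^\hbar_X$-modules transfers back to $\cA_X$-modules without change, yielding $\fRHom_{\cA_X}(\cA_X^{loc},\cM) \in \Der^{-}(\cA_X)$. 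The main delicate point that I expect to spend care on is the rigorous justification of the change-of-rings identity in the algebroid setting, since $\cA_X$ is not literally a sheaf of rings; this reduces to the sheaf-of-rings case either by working locally where the DQ-algebroid is represented by an honest sheaf of algebras or by interpreting the computation inside the derived category of algebroid modules as in \cite{KS3}, and the remainder of the argument is formal.
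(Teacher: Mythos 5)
Your proposal is correct and is essentially the argument the paper has in mind (the paper gives no proof of this corollary, treating the reduction to $\C^\hbar_X$ as routine). The change-of-rings/tensor--Hom adjunction $\fRHom_{\cA_X}(\cA_X\Lte_{\C^\hbar_X}\C^{\hbar,loc}_X,\cM)\simeq \fRHom_{\C^\hbar_X}(\C^{\hbar,loc}_X,\cM)$, valid since $\C^\hbar_X$ is central and $\cA_X$ is flat over it (so the restriction functor preserves K-injectives and the identification is genuinely derived), together with exactness of restriction of scalars, is exactly what is needed.
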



\begin{thebibliography}{10}

\bibitem{Barto}
C.~Bartocci, U.~Bruzzo, and D.~Hernández~Ruipérez.
\newblock {\em Fourier-Mukai and Nahm transforms in geometry and mathematical
  physics}, volume vol. 276 of {\em Progress in Mathematics}.
\newblock Birkhäuser, 2009.

\bibitem{BVdB}
A.~Bondal and M.~van~den Bergh.
\newblock Generators and representability of functors in commutative and
  noncommutative geometry.
\newblock {\em Mosc. Math. J.}, 3(1):1--36, 258, 2003.

\bibitem{resdu}
R.~Hartshorne.
\newblock {\em Residues and duality}.
\newblock Lecture Notes in Mathematics, No. 20. Springer-Verlag, Berlin, 1966.

\bibitem{Huy}
D.~Huybrechts.
\newblock {\em Fourier-{M}ukai transforms in algebraic geometry}.
\newblock Oxford Mathematical Monographs. The Clarendon Press Oxford University
  Press, Oxford, 2006.

\bibitem{categories_and_sheaves}
M.~Kashiwara and P.~Schapira.
\newblock {\em Categories and sheaves}, volume 332 of {\em Grundlehren der
  Mathematischen Wissenschaften}.
\newblock Springer-Verlag, Berlin, 2006.

\bibitem{KS3}
M.~Kashiwara and P.~Schapira.
\newblock {\em Deformation quantization modules}, volume 345 of {\em
  Astérisque}.
\newblock Soc. Math. France, 2012.

\bibitem{Nee_comp}
A.~Neeman.
\newblock The connection between the {$K$}-theory localization theorem of
  {T}homason, {T}robaugh and {Y}ao and the smashing subcategories of
  {B}ousfield and {R}avenel.
\newblock {\em Ann. Sci. \'Ecole Norm. Sup. (4)}, 25(5):547--566, 1992.

\bibitem{Nee_book}
A.~Neeman.
\newblock {\em Triangulated categories}, volume 148 of {\em Annals of
  Mathematics Studies}.
\newblock Princeton University Press, Princeton, NJ, 2001.

\bibitem{PantenBassa}
T.~{Pantev}, O.~{Ben-Bassat}, and J.~{Block}.
\newblock Non-commutative tori and {F}ourier-{M}ukai duality.
\newblock {\em Compos. Math.}, 143:pp. 423--475., 2007.

\bibitem{dgaff}
F.~{Petit}.
\newblock {DG} affinity of {DQ}-modules.
\newblock {\em International Mathematics Research Notices}, 2012(6):1414--1438,
  2012.

\bibitem{Rav}
D.~C. Ravenel.
\newblock Localization with respect to certain periodic homology theories.
\newblock {\em Amer. J. Math.}, 106(2):351--414, 1984.

\bibitem{mordg}
B.~To{\"e}n.
\newblock The homotopy theory of dg-categories and derived morita theory.
\newblock {\em Inventiones mathematicae}, 167:615--667, 2007.

\end{thebibliography}
\end{document}